\title[Nilpotent orbit theorem in $p$-adic Hodge theory]{Nilpotent orbit theorem in $p$-adic Hodge theory}
\author{Mohammad Reza Rahmati, Gerardo Flores}
\thanks{}
\address{.
\hfill\break 
\hfill\break \\
\hfill\break }
\email{mrahmati@cimat.mx, gflores@cio.mx}
\newcommand{\comments}[1]{}
\newtheorem{theorem}{Theorem}[section]
\newtheorem{proposition}[theorem]{Proposition}
\newtheorem{definition}[theorem]{Definition}
\newtheorem{remark}[theorem]{Remark}
\newtheorem{example}[theorem]{Example}
\keywords{$p$-adic Hodge structure, nilpotent orbit, Geometric Invariant Theory}
\begin{document}

\begin{abstract}
We state and prove three orbit theorems on the period domains for the $p$-adic Hodge structure analogous to the complex case. We shall consider the variation of de Rham (resp. \'etale) cohomology in a family of projective varieties $f:\mathfrak{X} \to S$ defined over a p-adic field. First, we show that any nilpotent orbit in the period domain of p-adic Hodge structures converges to a semistable point (filtration) in the period domain of the p-adic Hodge structure. Furthermore, the nilpotent orbits of the limit point are asymptotic to the twisted period map [Theorem \ref{thm:nilpotent-orbit}]. The orbit theorems come with some estimates of the distance between the nilpotent orbit and the twisted period map. The distance estimate in the p-adic nilpotent orbit theorem is given concerning the non-archimedean metric and is based on the p-adic Fourier analysis of Amice-Schneider. The result is analogous to the orbit theorems of W. Schmid [\cite{Sch}-1973] on complex Hodge structures. Our proof is based on a \textit{Geometric Invariant Theory} (GIT) criterion for semi-stability (Kempf-Ness theorem) and estimates from the (Amice-Schneider) p-adic Fourier theory. We also state the $SL_2$-orbit theorem in the p-adic case, [Theorem \ref{th:homomorphism}]. Finally, we explain how the nilpotent orbit theorem should be modified and stated for a variation of the mixed Hodge structure [Theorem \ref{thm:mixed-orbit}].
\end{abstract}

\maketitle


\section{Introduction}\label{sec:intr}
The $p$-adic Hodge theory studies the analogy between Galois representations $V$ and certain filtered modules $(M, F^{\bullet})$ with a semilinear action of $\phi$ [the Frobenius] and $\Gamma$ [certain subgroup of the absolute  Galois group], called $(\phi,\Gamma)$-modules. The action is usually equipped with a nilpotent transformation $N$, with transverse property to the Hodge filtration $F^{\bullet}$ also called $(\phi, \Gamma, N)$-module. The module $M$ can be obtained from the representation $V$ by a specific extension of coefficients. Conversely, the representation $V$ can be constructed from $M$ by specialization [taking fixed points under Galois action]. We provide this example in this text; however, the interested reader can find the complete discussion in standard texts of $p$-adic Hodge theory, \cite{B, BP, DOP, FO, I}.

The analog of the complex Hodge decomposition (HS) in p-adic theory is deduced from the convergence of Hodge to de Rham spectral sequence. One obtains a Hodge-type decomposition for the p-adic \'etale cohomology over $\mathbb{C}_p$. The p-adic \'etale cohomology at the same time is also a representation of the absolute Galois group of the ground p-adic field. Besides, both decompositions above can be equally described through the convergence of the Hodge to de Rham spectral sequence.
The asymptotic theory of the p-adic Hodge structure has been attended sufficiently in the literature. One reason is that asymptotic meaning is not as straightforward as in the real or complex variable case. Also, exponentiating a nilpotent transformation or taking the logarithm with a twisted variable to preserve the HS is not as easy as the complex case. 

Nilpotent orbit theorems in asymptotic Hodge theory refer to the two theorems discovered by W. Schmid \cite{Sch} on the asymptotic behavior of Hodge structure (HS) under the action of the exponential of specific nilpotent transformation twisted with the variable $z$. The nilpotent transformation is assumed to preserve the Hodge filtration. The theorem of Schmid is closely related to the problem of extending a variation of Hodge Structure (VHS) over singularities and asymptotic properties of their period maps. Precisely, orbit theorems in Hodge's theory concern two theorems. The first one is called the Nilpotent orbit Theorem expressing that the limit of a Hodge structure under the 1-parameter orbit of a nilpotent transformation preserving the structure 'exists' and is asymptotic to the period map (see the ref. \cite{Sch} ). The second theorem is called the $SL_2$-orbit Theorem, mainly expressing that any nilpotent orbit is asymptotic to another nilpotent orbit which arises from a representation of $SL_2$, \cite{P}.

Let $S$ be a smooth $L$-variety, and $f:\mathfrak{X} \to S$ a proper smooth morphism. Suppose that this admits a good model over the ring of integers of $L$, i.e. it extends to a proper smooth morphism $f : \mathcal{X} \to \mathcal{S}$ of smooth schemes over the ring of integers $\mathcal{O}_K$. Suppose, moreover, that all the cohomology
sheaves $R^q\pi_*\Omega^p_{\mathcal{X }/\mathcal{S}}$
are sheaves of locally free $\mathcal{O}_{\mathcal{S}}$-modules. The assumption can always be achieved by
possibly enlarging the set $S_0$ of bad primes, [\cite{De}, Theorem 5.5].
The generic fiber of $H^q$
is equipped with the Gauss-Manin connection (by [\cite{NKT}, Theorem 1]) and, again by enlarging $S_0$ if necessary, we may suppose that this extends to a
morphism
\begin{equation} 
\nabla_{GM}:H^q \to H^q \otimes \Omega^1_{\mathcal{S}}.
\end{equation}
Write $\rho_s$ for the Galois representation of $G_L = Gal(\overline{L}/L)$ on the p-adic geometric \'etale
cohomology of $\mathfrak{X}_s$, i.e. $H^*_{et}(\mathfrak{X}_s \times \overline{L}, \mathbb{Q}_p)$.  Fix an archimedean place $\iota : L \hookrightarrow \mathbb{C}$, and fix a finite place $v : L \hookrightarrow L_v$ satisfying:
\begin{enumerate} 
\item if $p$ is the rational prime below $v$, then $p > 2$, and
\item $L_v$ is unramified over $\mathbb{Q}_p$, and
\item no prime above $p$ lies in $S_0$, the set of bad primes.
\end{enumerate}
The Gauss-Manin connection allows us to identify the cohomology of nearby fibers. This applies to the $L_v$ and $\mathbb{C}$ topologies. In fact, both identifications can be described as the evaluation of a single power series with $L$ coefficients, which is convergent both for the $L_v$ and $\mathbb{C}$ topology, cf. \cite{LV}.
On the other hand, under the correspondence of p-adic Hodge theory, the restricted representation $\rho_{s,v}$ corresponds to a filtered
$\phi$-module, namely the de Rham cohomology of $\mathfrak{X}_s$ over $L_v$ equipped with its Hodge filtration and a semilinear Frobenius map. The variation of this filtration is described by a
period mapping; in this setting, this is a $L_v$-analytic mapping
\begin{equation} \label{eq:map}
\Pi: \text{residue disk in} \ S(L_v) \ \longrightarrow \ L_v \ \text{points of a flag variety,}
\end{equation}
Therefore, the variation of the p-adic representation $\rho_{s,v}$ with $s$ is controlled by \eqref{eq:map}. The construction of the map \eqref{eq:map} is based on the existence of an integral model for $\mathfrak{X} \to S$ over the ring of integers of $L$ and the Gauss-Manin connection. One also needs the comparison isomorphism of G. Faltings between the \'etale and crystalline cohomology. It follows that in case $f$ is defined over a p-adic field, fixing a point $0 \in S$, one obtains a well-defined map $$\Pi:\widehat{B'(r)} \to \mathcal{D}, \qquad r \ \text{small}$$ from a universal cover of a local punctured disc around $0$ to a classifying space $\mathcal{D}$. In this text, we do an asymptotic theory for the map $\Pi$ analogous to the complex case. As we will see, most of the constructions known for the complex case can also be equally carried over to the p-adic settings. 
\subsection{Contributions} 
We state and prove three-orbit theorems for the p-adic Hodge structures in a variational setting. The contributions appear in section \ref{sec:main-result} as Theorems \ref{thm:nilpotent-orbit} and \ref{thm:mixed-orbit}. We also explain how a similar theorem can be stated for mixed p-Hodge structure. The first result, namely the p-adic nilpotent orbit theorem [Theorem \ref{thm:nilpotent-orbit}, employs the notion of a 1-parameter orbit defined by the nilpotent transformation $N$ of a $\phi$-isocrystal. Moreover, this studies the asymptotic behavior of the action of the orbit on points in the period domain. 

Consider the family $\mathfrak{X} \to S$ of projective algebraic varieties defined over a p-adic field. We associate a period mapping $\Pi: \widehat{B'(r)} \to \mathcal{D}$, where $\mathcal{D}$ is a classifying space (period domain) of p-adic Hodge structure, and $\widehat{B'(r)}$ is the universal cover of a punctured disc of small redius on $S$- see Section \ref{sec:p-adichodgetheory} below, \cite{Hansen2016PeriodMA, padicoeter}. One can define a notion of GIT stability for the points on $\mathcal{D}$. We shall study an asymptotic behavior of stability on the period domain $\mathcal{D}$. We prove an analog of the W. Schmidt nilpotent orbit theorem for the period mapping $\Pi$ for p-adic Hodge structures. This means that a limit p-adic Hodge structure $F_{\infty}$ can be defined locally attached to a nilpotent orbit. Thus we obtain analogous asymptotic Hodge theory for the p-adic varieties. The result states that the limit of a semistable point $F_0 \in \mathcal{D}$ is also a semistable filtration.  
Denote this limit by $F_{\infty}$. Moreover, the nilpotent orbit of $ F_{\infty}$ is asymptotic to the period map $\Pi$. The nilpotent orbit theorem comes with some essential functional estimates in the Hodge theory. In our case, these estimates appearing in the Proposition \ref{thm:fourier-estimate} are related to p-adic Fourier theory. 

An $SL_2$-orbit is a specific sub-nilpotent orbit extracted from a nilpotent orbit. Specifically, we mean that the extracted orbit is defined via a representation of $SL_2$. The formulation of the $SL_2$-orbit theorem is almost a formal context, following the details from the complex case. The context applies over any field of characteristic zero. More specifically, There exists a representation $\psi_p:SL(2,\mathbb{C}_p) \to G_{\mathbb{C}_p}$, that satisfies,
\begin{equation}
\psi_{p,*}(X_+) \in \mathfrak{g}_{-1}, \qquad \psi_{p,*}(Z) \in \mathfrak{g}_{0}, \qquad \psi_{p,*}(X_-=N) \in \mathfrak{g}_{1},
\end{equation}
where $X_+,Z,X_-=N$ are $\mathfrak{sl}_2$-triples. The $SL_2$-orbit theorem states that this representation is compatible with the nilpotent orbit via certain analytic maps. The conclusion is that an $SL_2$-orbit can be extracted as a sub-orbit of the original orbit, which is asymptotic.

Finally, we stress how all the above concepts must be defined in the mixed setup, i.e., when the underlying variation is of mixed type. The reader should note that in the mixed case, the period map can have essential singularities that show different behavior than the pure case, \cite{P}. In this case, we define $N$-admissible nilpotent orbit, enabling us to state the orbit theorems when the admissibility conditions hold.
\subsection{Tools}
Our method to prove the orbit theorems in p-adic Hodge theory is new and different from the complex case. This text uses three main tools to prove the orbit theorems. First, because the distance function on the period domains of p-adic Hodge structure is a non-archimedean metric, we eliminate the ad-hoc difficulties that appear in the complex case. However, we employ technical aspects from p-adic Hodge theory. Below we briefly summarize the primary tools used to prove the main result. Further discussions appear in the following sections.
\subsubsection{p-adic Fourier correspondence.} 
One of the main tools in formulating and proving our orbit theorem comes from p-adic Fourier theory. p-adic Fourier correspondence provides a theoretic interpretation of the p-adic unit disc. This correspondence behaves compatibly to applying the Sen theorem [see below] to prove our main result. By the work of Y. Amice \cite{A, ST, BSX}, there is an isomorphism 
\begin{equation} \label{eq:Fourier-corresp}
\begin{aligned} 
B(\mathbb{C}_p) \  &\stackrel{\cong}{\longrightarrow} \{\mathbb{Q}_p
 \text{-analytic characters} \ \mathbb{Z}_p \to \mathbb{C}_p \} =: \widehat{\mathbb{Z}_p},\\
 z &\longmapsto \kappa_z(x):=(1+z)^x
 \end{aligned}
\end{equation}
where $\widehat{\mathbb{Z}_p}$ is the character group of the ${\mathbb{Z}_p}$. According to this correspondence, we will regard the points of the $p$-adic unit disc as characters. The isomorphism \eqref{eq:Fourier-corresp} gives a representation interpretation of the unit disc in $p$-adic Hodge theory. When $\mathbb{Q}_p$ is replaced by a finite extension $L/\mathbb{Q}_p$, some difficulties appear in expressing the exact formula for the character $\kappa_z$. There is no explicit formula over a finite extension $L/\mathbb{Q}_p$. When working over a p-adic field $L$, the cyclotomic character is replaced by its analog,
\begin{equation} 
\chi_L: G_L \to \mathcal{O}_L^{\times}, \qquad H_L=\ker(\chi_L)
\end{equation} 
called Lubin-Tate character. Taking a uniformizer $\pi \in \mathcal{O}_L$, the inverse system of residual quotients of $\mathcal{O}_L$ by powers of $\pi$ defines a formal group law, or a $p$-divisible group. Over $L$, there is no explicit formula for the Lubin-Tate character as in \eqref{eq:Fourier-corresp}. However, the whole correspondence in \eqref{eq:Fourier-corresp} remains valid, \cite{ST}. There is an action of $\Gamma=\mathcal{O}_L^{\times}$ on the ring of bounded analytic functions of the punctured disc with the Frobenius action to be $\phi=\pi_*$. In this case, we have the embedding $\mathcal{O}_L(B_L) \hookrightarrow B_{dR}$, where $B_{dR}$ is the Fontaine ring, [see \cite{B, BP, I} for instance]. We have,
\begin{equation} 
\begin{aligned} 
&\mathcal{O}_L \cong \mathbb{Z}_p \times ... \times \mathbb{Z}_p, \quad  \text{as}\ \mathbb{Q}_p \text{-Lie groups} \\
&B_L \hookrightarrow B_{\mathbb{Q}_p} \times ... \times B_{\mathbb{Q}_p},  \quad \text{as analytic subvarieties} \\
&\mathcal{O}_L (B_L) \twoheadleftarrow \mathcal{O}_L(B_{\mathbb{Q}_p} \times ... \times B_{\mathbb{Q}_p}).
\end{aligned}
\end{equation}
The geometry of the disc $B$ can be explained by the analytic characters of $\mathcal{O}_L$. By the above formalism, we identify the points of the $p$-adic disc with analytic characters. As a result, the Fourier correspondence is natural and quite common in p-adic Lie theory. This motivation is compatible with Sen's formalism and the Grothendieck monodromy theorem.  
\subsubsection{Mumford GIT stability} 
We wish to study the semistability of the points in the period domains of the p-adic Hodge structure under the nilpotent orbits. The period domains are simply flag varieties parametrizing the Hodge filtrations of the same type. The associated flag variety has a natural transitive $G$ action, where $G$ is a Lie group defined over p-adic numbers. In this case, specific facts from geometric invariant theory (GIT) can be applied to study the semistability of the points on the orbits in the period domains. GIT systematically assigns a moment map to the $G$-action over a projective variety. Assume the group $G$ acts on the space $\mathcal{D}$ and let,
\begin{equation}
\mu : \mathcal{D} \to \mathfrak{g}=\text{Lie}(G)
\end{equation}
be the moment map, associated with the action of $G$ on $\mathcal{D}$. The properties of the nilpotent orbits in $\mathcal{D}$ can be studied via the map $\mu$. Mumford's semistability criterion is essential in studying the asymptotic theory of 1-parameter orbits. This formalism applies to the nilpotent orbits of the period domains. The limit of a nilpotent orbit can be characterized as the extreme point of the gradient flow of a specific Morse function. Several GIT criteria describe different types of stability properties for the points of the space $\mathcal{D}$. One of these criteria is the well-known Kempf-Ness theorem gives necessary and sufficient conditions for the semistability of a point. It can be briefly written as the following, 
\begin{equation} \label{eq:kempf-ness}
x \in X^{ss} \quad \iff \quad G. x \cap \mu^{-1}(0) \ne \varnothing
\end{equation}
where $\mathcal{D}$ is the locus of semistable points on $\mathcal{D}$, the conditions in \eqref{eq:kempf-ness} are equivalent to that; the orbit $G.x$ to be closed, i.e., it contains its limit points, \cite{VRS, L, ZA}. We use this chain of equivalences to prove the asymptotic stability of a nilpotent orbit in the period domain $\mathcal{D}(\nu)$.  
\subsubsection{Sen formalism in p-adic Hodge theory}
Sen formalism in p-adic Hodge theory concerns a method to associate a specific (unique) operator (called Sen operator, denoted $\Theta$ in below) to a representation of the (absolute) Galois group of a p-adic field $L$. Sen theorem is one of the major tools in the asymptotic theory of p-adic representations. It also allows for relating local systems of p-adic representations to the Gauss-Manin connections. The Sen operator $\Theta$ associated with a representation $\rho$ of the absolute Galois group characterizes the representation $\rho$ up to isomorphism and gives a powerful tool to study the asymptotic properties of the p-adic Hodge structure. The original Sen theorem is more general than the one we employ in this article. We only state a version for Galois representations over $\mathbb{C}_p$. The statement of Sen theorem we use is as follows [see \cite{FO, SE, B} for more detailed discussions]. Suppose that $L$ is an $p$-adic field. If 
\begin{equation}
\rho:G_L \to Aut_{\mathbb{C}_p}(V)
\end{equation}
is a $\mathbb{C}_p$-representation of $G_L$ on the $\mathbb{C}_p$ vector space $V$. Then, there exists a unique operator $\Theta \in End(V)$ where the representation $\rho$ can be expressed locally as:
for every $v \in V$, there exists an open subset $U_{v} \in G_L$, such that,
\begin{equation}
\rho(\sigma)v=\exp[\Theta \log \chi(\sigma)]v, \qquad \sigma \in U_{v}
\end{equation}
where $\chi:G_L \to \mathbb{Z}_p^{\times}$ is a cyclotomic character. The operator $\Theta$ is the Sen operator associated with the representation $\rho$. One can show that two $\mathbb{C}_p$ representations are isomorphic if and only if their corresponding Sen operators are similar. It is a famous result by Sen that the Lie algebra $\mathfrak{g}=Lie(\rho(G_L))$ is the smallest $\mathbb{Q}_p$-space $S \subset End_{\mathbb{Q}_p}V$ such that $\Theta \in S \otimes_{\mathbb{Q}_p} \mathbb{C}_p$. We must stress that the Sen theorem can be stated in a more general setup, and the above version is a simple case that we use in this article.
\subsection{Summary}
Section \ref{sec:preliminaries} consists of preliminaries from p-adic Hodge theory. We first explain the Hodge-Tate decomposition, which is the analog of Hodge decomposition in the complex case. Next, we explain the Weil conjectures in $l$-adic cohomology and the weight filtration defined by Deligne. Then, we proceed with some brief about $(\phi,\Gamma)$-modules and natural correspondences between them and p-adic representations of the Galois groups. We present classical period domains over p-adic fields in section \ref{sec:period-domain} as a setup to express our main results. Next, we provide some basics of the notion of stability over period domains in section \ref{sec:stability}. In Section \ref{sec:fourier} we explain the p-adic Fourier correspondence of Amice and Schneider. Section \ref{sec:main-result} contains our main results and all the material there are our contributions. We state three main theorems, nilpotent orbit, $SL_2$-orbit theorems in the p-adic settings, and nilpotent orbits in the variation of mixed Hodge structure in p-adic settings. Their proofs employ all the materials briefly explained before in the text.
\section{Preliminaries} \label{sec:preliminaries}
This section briefly explains some basic definitions and aspects of p-adic Hodge theory. Then, the discussion summarizes known facts, and the reader is advised to look at the references for detailed analysis. A p-adic Hodge structure is analogous to the complex Hodge decomposition. The Hodge pieces appear as eigenspaces of the Frobenius map acting on certain Galois representations. p-adic Hodge theory studies specific correspondences between Galois representations and $(\phi, \Gamma)$-modules.
\subsection{p-adic Hodge theory} \label{sec:p-adichodgetheory}
\textbf{1. Hodge-Tate decomposition} The $p$-adic \'etale cohomologies of a projective variety $X$ defined over a $p$-adic field $L$ satisfy the decomposition,
\begin{equation}\label{eq:uno}
H_{\textrm{\'et}}^{k}(X_{\overline{\mathbb{Q}_p}},\mathbb{Q}_p) \otimes \mathbb{C}_p= \bigoplus_{i=0}^k  H^{k-i}(X,\Omega_{X/ L}^i) \otimes_{\mathbb{Q}_p} \mathbb{C}_p(-i).
\end{equation}
We denote the $i$-th summand on the right-hand side of \eqref{eq:uno} by $H^{i,k-i}(X/L)$. The decomposition is called \textit{Hodge-Tate decomposition} and results from the convergence of the Hodge to de Rham spectral sequence, 
\begin{equation}
E_1^{p,q}= H^q(X, \Omega_{X/L}^p) \Rightarrow H_{dR}^{p+q}(X/L).
\end{equation}
The groups $H_{\textrm{\'et}}^k(X_{\overline{\mathbb{Q}_p}},\mathbb{Q}_p)$ are $\mathbb{Q}_p$-vector spaces and Galois modules over $G_L=Gal(\overline{\mathbb{Q}_p}/L)$, i.e. we have a continuous representation of the absolute Galois group,
\begin{equation}
\varrho:Gal(\overline{\mathbb{Q}_p}/L) \to Gl(H_{\textrm{\'et}}^k(X_{\overline{\mathbb{Q}_p}},\mathbb{Q}_p))
\end{equation}
called a Galois representation. The twist with $\mathbb{C}(-i)$ means a factor $\chi^i:G_{\mathbb{Q}_p} \to \mathbb{Z}_p^{\times}$ where $\chi$ is the cyclotomic character, acting on the cohomology. The numbers 
\begin{equation} 
h^{r,s}=\dim_{\mathbb{C}_p} H^{i,k-i}(X/L) \otimes_{\mathbb{Q}_p} \mathbb{C}_p(-i)
\end{equation} 
are called Frobenius-Hodge numbers, analogous to the complex Hodge numbers, \cite{Fa1, Fa2, Gr, O1, FM, FO1, FO2, KL, I, Ta, Be, C, Y}. When we have a family of projective varieties defined by a fibration $\mathfrak{X} \to S$ of quasi-projective varieties $X$ and $S$ over $L$, then the decomposition in \eqref{eq:uno} of the $L$-th \'etale cohomology of each fiber defines a filtration,
\begin{equation}\label{eq:filtration_}
F_s^i= \bigoplus_{r \geq i}  H^{k-r}(X,\Omega_{X/ L}^r) \otimes_{\mathbb{Q}_p} \mathbb{C}_p(-r).
\end{equation} 
of the same type. However, the isomorphism \eqref{eq:uno} does not exist in families. This is analogous to the Hodge decomposition over the complex numbers that 
does not vary holomorphically infamilies. One can construct the variety of flags, i.e., a period space denoted $\mathcal{D}(\nu)$ ($\nu=(\nu_i)$ where $\nu_i=\sum_{r \geq i} h^{r,s}$) and define a map, ${S} \to \mathcal{D}(\nu), \  s \mapsto F_s^i,$ parametrizing the filtrations in \eqref{eq:filtration_} \cite{Hansen2016PeriodMA, padicoeter, DOP, R, R1, I, C}. However; the map $\pi$ in general is not well defined, due to the existence of monodromy. 

The analog of the equality
$\dim_{\mathbb{C}} H^i_{dR}(X/\mathbb{C})=  \dim_{\mathbb{Q}_p} H^i_{et}(X,\mathbb{Q}_p)$ 
follows from the comparison isomorphism between singular and de Rham cohomology. In the p-adic case, the situation is slightly more complicated, and the comparison isomorphism only exists after extending scalars to Fontaine's field of p-adic periods $B_{dR}$. If $X$ is only defined over $\mathbb{C}$, it is nontrivial to formulate the correct statement, as there is no natural map 
$\mathbb{C} \to B_{dR}$ along which one can extend scalars. There is however a different way to obtain the desired equality of dimensions. This relies on the Hodge-Tate spectral sequence, \cite{Fa2, padicoeter}.

\vspace{0.3cm}

\textbf{2. Weil conjectures in $l$-adic cohomology} 
Let us suppose that $V_l$ is an $l$-adic representation of the absolute Galois group $G_L$ over $p$-adic field $L$, namely $\varrho:G_L \to GL(V_l)$, where $L$ is the residue field and $I$ is the inertia group. Our example is the \'etale cohomology groups $H^n(X_{\overline{L}}, \overline{Q}_l)$. If $l \ne p$, the semi-stability of $V_l$ means that the inertia subgroup $I(\mathfrak{p})$ at the prime $\mathfrak{p}$ of $\overline{L}$ acts unipotently on $V_l$. Moreover, this action is given by the exponential of a morphism,
\begin{equation}
N:V_l(1) \to V_l
\end{equation}
of $l$-adic representations of $G_L$. It follows that
\begin{equation} 
\varrho(g)=\exp(N. \chi_l(g)), \qquad g \in I(\mathfrak{p}).
\end{equation} 
This is always the case for the $l$-adic \'etale cohomologies, \cite{B, D, KL, Pa, SE, Ta}. When $l=p$, the above formalism is explained via the Sen theorem, \cite{B, Be, Pa, SE}. In this case, the nilpotent transformation $N$ is assumed to satisfy,
\begin{equation} 
NF=q.FN, \qquad q= \text{order of the residue field of L}. 
\end{equation} 
The endomorphism $N$ defines the local monodromy filtration: that is, the unique increasing filtration
\begin{equation} 
... \subset M_i \subset M_{i+1} \subset ...
\end{equation} 
such that:
\begin{itemize} 
\item $N.M_i V(1) \subset M_{i-2} V$,
\item $gr_k^MV(k) \stackrel{\cong}{\longrightarrow} gr_{-k}^MV$. 
\end{itemize} 
By the work of Deligne on Weil conjectures [see \cite{D}], the eigenvalues $\alpha$ of the Frobenius $F$ are algebraic integers. Moreover, there exists integers $w(\alpha)$ such that all complex conjugates of $\alpha$ have absolute value $q^{w(\alpha)/2}$. Let 
\begin{equation} 
W_j= \bigoplus \ \text{generalized eigenspaces of}\ F \ \text{with eigenvalue}\ \alpha \ \text{with}\ w(\alpha)=j. 
\end{equation} 
The filtration 
\begin{equation} 
P_{\bullet}:=\{\ \bigoplus_{j \leq i}\ W_j \ \}
\end{equation} 
is the weight filtration. One can show that the trace of the Frobenius $F_q$ is independent of $l$ in this case, see \cite{D, KL}.
\vspace{0.3cm}

\textbf{3. $(\phi, \Gamma)$-modules in p-adic Hodge theory} \label{sec:phi-N-modules}
The formalism in $p$-adic Hodge theory studies the correspondence between Galois representations and $(\phi, \Gamma)$-modules; here $\phi$ stands for the Frobenius map and $\Gamma$ is a subgroup of the absolute Galois group, \cite{B, FO, Pa}. Let ${E}(B)$ be the norm completion of the ring of bounded analytic functions on the anulus $\ 0< | z | <1$. It has a structure of a filtered $(\phi,\Gamma)$-module where $\Gamma=\mathbb{Z}_p^{\times}$ and $\phi=p_*$ is induced by multiplication map by $p$; \cite{A}, \cite{BSX}, \cite{ST}. We denote the Hodge filtration on $E(B)$ by $F^{\bullet}$. Let 
\begin{equation} 
\chi:G_{\mathbb{Q}_p} \to \mathbb{Z}_p^{\times}
\end{equation} 
be the cyclotomic character and set $H_{\mathbb{Q}_p}:=\ker(\chi)$. By a theorem of Fontaine \cite{FO}, \cite{ST}, there is a one to one correspondence 
\begin{equation} \label{eq:1-1}
\begin{aligned}
\{\mathbb{Z}_p \ \text{representations of} \ G_{\mathbb{Q}_p}\} &\stackrel{\cong}{\longrightarrow} \{ \text{\'etale} \ (\phi,\Gamma)-\text{modules over} \  {E}(B)\}\\
V \qquad &\longmapsto \qquad ({{E}}(B) \otimes_{\mathbb{Z}_p} V)^{H_{\mathbb{Q}_p}}.
\end{aligned}
\end{equation}
where the right-hand side denotes the elements of ${{E}}(B) \otimes_{\mathbb{Z}_p} V$ fixed by $H_{\mathbb{Q}_p}$. Both sides of \eqref{eq:1-1} are equipped with filtrations and a nilpotent transformation $N$. The filtrations and the nilpotent transformations $N$ correspond to each other in \eqref{eq:1-1}. We also assume that $N$ preserves the filtration as follows,
\begin{equation} 
N. F^i \subset F^{i-1}, \qquad \text{transverse property}.
\end{equation} 
In this case, the map $N$ on the right-hand side of \eqref{eq:1-1} is called the $p$-adic monodromy; see \cite{ST, A, Pa, I, C, KL, BP}.

In $p$-adic Hodge theory, we are involved with a series of natural $1-1$ correspondences between $(\phi,\Gamma)$-modules and different $\mathbb{Z}_p$-linear representations satisfying de Rham, crystalline or semistability conditions, \cite{BSX}. There is an analog of this map in the crystalline setting, namely $N:V_p(1) \to V_p$ of $p$-adic representations or the local Galois group $G_K$ called the $p$-adic monodromy morphism. There is no hope of having such a $p$-adic monodromy morphism for every semistable $p$-adic Galois representation. This analogy is explained by the notion of Fontaine or $(\Phi,N)$-modules. A $(\Phi,N)$-module is a $L$-vector space $D$ equipped with,
\begin{itemize}
\item a $\sigma$-linear map (called Frobenius) $\Phi:D \to D$ 
\item a $L$-endomorphism (called monodromy) $N:D \to D$ 
\item $N\Phi =p \Phi N$
\item $D$ has an exhaustive separated decreasing filtration $(Fil^iD_K)_{i \in \mathbb{Z}}$.
\end{itemize}
Denote by $M_K(\Phi,N)$ the category of these modules. Let $V$ be a continuous finite-dimensional representation of $G_K$. Define,
\begin{equation}
D_{st}:Rep_{st}(G_k) \to M_K(\Phi,N), \qquad D_{st}(V):=(B_{st} \otimes_{\mathbb{Q}_p} V)^{G_K}
\end{equation} 
where the superfix means the fixed points, and $B_{st}$ is the Fontaine period ring, the image of this functor is called the admissible filtered $(\Phi,N)$-module. We always have $\dim_{\mathbb{Q}_p}V \geq \dim_K D_{st}(V)$ and $V$ is called semistable if $\dim_{\mathbb{Q}_p}V = \dim_K D_{st}(V)$. If this equality holds for $L$ replaced by a finite extension, it is potentially semistable. We say an object $D \in M_K(\Phi,N)$ has transverse monodromy if,
\begin{equation}
N. Fil^i D_K \subset Fil^{i-1}D_K .
\end{equation}
If $D$ is admissible, then the monodromy $N:D \to D$ induces a $L$-linear map,
\begin{equation}
N_{st}:D(1) \to D
\end{equation}
that depends on the trivialization of the Tate twist. The morphism $N_{st}$ is called the monodromy morphism of $D$. One can show that $N_{st}$ is a morphism of filtered $(\Phi,N)$-modules if and only if $D$ has transverse monodromy. Let us $M_K^{a,tm}(\Phi,N)$ be the category of such modules. It is a tannakian subcategory of $M_K^a(\Phi,N)$. The corresponding representations would be denoted by $Rep_{st,tm}(G_K)$. The map $N_{st}$ corresponds to $N_p:V(1) \to V$ via this correspondence. The correspondence in \eqref{eq:1-1} can also be stated with the rings $\mathcal{R}(B)$ or $\mathcal{E}^{\dagger}(B)$ as well. Under all these isomorphisms, the \'etale modules correspond to each other.
\subsection{Prelimineries on $p$-adic Period Domains}\label{sec:period-domain}
This section briefly explains the basic definitions and terminologies of p-adic period domains used in this text. For a more comprehensive discussion on the concept, we refer to \cite{DOP}. Let $W(k)$ be the ring of Witt vectors of the algebraically closed field $L$ of characteristic $p$, and $L$ be its quotient field. Then, a $\phi$-isocrystal over $L$ is a free module over $W(k)$ equipped with a Frobenius-linear map $\phi$. According to a theorem of J. Dieudonn\'e [see \cite{DOP}], when $L$ is algebraically closed, the category of $\phi$-isocrystals over $L$ is semisimple with simple objects $E_{\lambda}=(L^s, \Phi_{\lambda})$ parametrised by $\lambda \in \mathbb{Q}$. The $\lambda$ is called the slope of $E_{\lambda}$. Thus, it follows that a $\phi$-isocrystal $E$ has a unique decomposition:
\begin{equation}
E = \bigoplus_{\lambda \in \mathbb{Q}} E_{\lambda},
\end{equation}
where $E_{\lambda}$ has slope $\lambda=r/s$. The module $E_{\lambda}$ has a basis of the form $v, \phi .v, \dots , \phi^{s-1} v$ and satisfies $\phi^s.v=p^r.v$. This means that there exists a $W(k)$-lattice $M \subset E_{\lambda}$ such that $\phi^s M = p^r M$. The filtration 
\begin{equation}
F^{\beta}= \bigoplus_{\lambda \leq -\beta} E_{\lambda}
\end{equation}
is called the slope or Newton filtration, \cite{DOP, R, R1, R2}. In this text, we denote the slopes by $\nu$ or $\lambda$. The following is an example of $\phi$-isocrystal of crystalline type.
\begin{example}
Take $L=\mathbb{Q}_p(\sqrt[p]{1})$, and $V=L^2 \cong Le_1 \oplus Le_2$. Let us consider the following data: $\phi(e_1)=e_1, \ \phi(e_2)=p^{k-1}e_2$ and 
\begin{equation} 
F^{k-1}=L(e_1+\pi^ie_2), \qquad i \in \{\ 0,...,p-2\ \},
\end{equation} 
and set $N=0$, and the Galois action:
\begin{equation} 
g.e_1=e_1, \qquad g.e_2={\chi}_L(g)^{-i}. e_2
\end{equation} 
for $g \in Gal(L/\mathbb{Q})$. Where $\chi_L$ is the Teichmuller lift of the cyclotomic character. Then, $V$ is a crystalline representation of $G_L$, i.e. it defines $\phi$-isocrystals with HT-weights $\{0,k-1\}$, \cite{B}. 
\end{example} 
If $L$ is algebraically closed, define the map $\nu : \ E \longmapsto \nu(E)=(\nu_1, \dots , \nu_n)$ from isomorphism classes of $\phi$-isocrystals of rank $n$ into
\begin{equation}
   (\mathbb{Q}^n)_{+} :=  \{ ( \nu_1, \dots ,\nu_n ) \in \mathbb{Q}^n \mid  \  \nu_1 \geq 	\dots \geq \nu_n \},
\end{equation}
where $\nu_{i}$ occurs in $\nu(E)$ with multiplicity the dimension of the isocrystal (isotypic also used) component of type $\nu_i$. The map  $\nu$ is an injective isomorphism called the Newton map. The image of the Newton map can be described as follows. Write $\nu \in (\mathbb{Q}^n)_+  $ as $\nu=(\nu_1^{n_1},...,\nu_r^{n_r}), \ \nu_1 > \nu_2 > ... > \nu_r$. Then 
\begin{equation} \label{eq:integrality}
\nu \in \text{Image} \qquad  \iff  \qquad \nu_i.n_i \in \mathbb{Z}, \ \forall i .
\end{equation} 
By ordering the slopes $\nu_1 \geq \nu_2 \geq ... \geq \nu_n$ one can define the Newton polygon of $E$ as the graph of the function,
\begin{equation} 
i \longmapsto \nu_1 +\nu_2+ ...+\nu_i, \qquad 0 \mapsto 0 .
\end{equation}
\begin{proposition} \cite{DOP} \cite{R}
Let $E$ be $\phi$-isocrystals over a scheme $S$ of characteristic $p$. Then, the Newton vector of $E_s$ goes down under specialization, that is, when $E$ is of constant rank the function $s \mapsto \Vert \nu(E_s) \Vert$ is locally constant on $S$. Furthermore, for any $\nu_0$, the set $\{ s \in S \mid \nu(E_s) \leq \nu_0 \}$ is locally closed in $S$. 
\end{proposition}  
When $L$ is the algebraic closure of $\mathbb{F}_q, \ q=p^h$, the Newton polygon is the Newton polygon of the polynomial $\det(1-\phi^h .t)$. The Newton polygon describes the combinatorics of the slopes of an isocrystal simply. 

Now, suppose $V$ is a $L$-vector space of dimension $n$, and define 
\begin{equation}
\mathcal{D}(\nu):= \{ \ 0 \subset V_1 \subset ... \subset V_r = V \mid \text{rank}(gr_F^i(V))=n_i \ \}.
\end{equation}
The group $G=GL(V)$ acts transitively on $\mathcal{D}(\nu)$. The kernel is the parabolic subgroup $P$ fixing a specific fixed flag of type $\nu$. This gives the identity $\mathcal{D}(\nu)=G/P$. In this sense, Berkovich defines a natural analytification functor,
\begin{equation}
\mathcal{D}(\nu) \longmapsto \mathcal{D}(\nu)^{an},
\end{equation} 
into a smooth compact $L$-manifold, which satisfies all the expected compatibility properties op. cit, \cite{DOP}. Moreover, the group $GL(V)$ acts transitively on $\mathcal{D}(\nu)$. Let $\mathcal{D}(\nu)^{ss}$ be the locus of semistable filtrations (see below for definition). 
$\mathcal{D}(\nu)^{ss}$ is the period domain associated to $(V,\nu)$, and is Zariski open in $\mathcal{D}(\nu)$ for basic reasons. The structure theory of flag variety $\mathcal{D}(\nu)$ (as a symmetric space) can be studied via the root system of the Lie group $G=Gl(V)$ and its Lie algebra. In this case, $\mathcal{D}(\nu)$ has the structure of Schubert variety, i.e., it has a stratification by Schubert cells corresponding to closure of torus orbits, \cite{DOP}. Moreover, there is a Berkovich analitification functor mapping $\mathcal{D}(\nu) \mapsto \mathcal{D}(\nu)^{an}$ the period domain into a p-adic analytic manifold. The following proposition explains the basic properties of this functor.
\begin{proposition} \cite{DOP} \cite{R}
Let $E$ be an isocrystal, and $\nu \in (\mathbb{Q}^n)_+$. Then,
\begin{itemize}
\item The subset $\{ \ x \in \mathcal{D}(\nu)^{an} \mid F_x \in \mathcal{D}(\nu)(\overline{k(x)})^{ss} \ \}$, is open in $\mathcal{D}(\nu)^{an}$, and hence it is the underlying space of an analytic space. (The field $k(x)$ is the residue field of the local ring at the point $x$, and the bar means completion). 
\item The map $\mathcal{D}(\nu) \to \mathbb{R}: \ x \mapsto \mu_{F_x}(E \otimes \overline{k(x)})$ is upper semi-continuous and locally constant.
\item There exists a rigid analytic space $\mathcal{D}(\nu)^{ss,rig}$ and a fully faithful functor $\mathcal{D}(\nu)^{ss,an} \to \mathcal{D}(\nu)^{ss,rig}$.
\end{itemize}
\end{proposition} 
A rigid space is a space that is locally isomorphic to an affinoid space on which the Galois group $G$ acts. An affinoid space is a space isomorphic to a subset of $p$-adic disc defined by the zero set of some ideal of convergent power series. A rigid analytic space is the analog of the complex analytic space over a non-archimedean field. Without mentioning it, we will always assume that $\mathcal{D}(\nu)$ is embedded into its rigid analytification via the Berkovich functor. See \cite{DOP}, \cite{SW} for details.  

When $E$ is defined over a noetherian scheme $S$ of characteristic $p$, then the set of points of $S$ where the Newton vector is constant is locally closed in $S$ and defines a finite decomposition of $S$. The period domains over p-adic fields enjoy exclusive properties satisfy,
\begin{equation} \label{eq:staratification}
\overline{\mathcal{D}_{\nu}} \subset \bigcup_{\nu' \leq \nu}  \mathcal{D}_{\nu'},
\end{equation}
called strong stratification property or Newton stratification property, \cite{DOP, R, R1, R2}. The property \eqref{eq:staratification} distinguishes the $p$-adic period domains from the complex ones. There are many structural open questions about these spaces. For instance, a major open question concerns if the closure of each strata $\mathcal{D}(\nu')$ meets $\mathcal{D}(\nu)$ for $\nu' \leq \nu$, \cite{DOP}. The basic example of period domains is the Siegel moduli space of abelian varieties of genus $g$, described below.
\begin{example} 
Assume we have a family of Abelian varieties $A/S$ over a base scheme of characteristic $p>0$. When $l \ne p$ the family of Tate modules $T_l(A_s)$ for $s \in S$ defines a local system of $\mathbb{Z}_l$-modules on $S$. If $l=p$, the Tate modules $T_l(A_s)$ are replaced by Dieudonne modules $M(A_s)$, which are $\phi$-crystals. In this case, the Dieudonne modules are not constant, as $s$ varies. Let $g$ be a positive integer, and $m \geq 3$ prime to $p$ an auxiliary integer. The Siegel moduli space $M_{g,m}$ of genus $g$ over $\mathbb{F}_p$ classifies the isomorphism classes of triples $(A, \lambda,\eta)$, where $A$ is an Abelian scheme of relative dimension $g$ over $S$, $\lambda:A \to \check{A}$ is a polarization, and $\eta$ is a level $m$ structure. The existence of polarization implies that the Newton vector of the fibers lie in 
\begin{equation} 
{(\mathbb{Q}^{2g})}_+^1=\{(\nu_1,...,\nu_{2g}) \in {(\mathbb{Q}^{2g})}_+ \mid \nu_i +\nu_{2g-i+1}=1, \ 0 \leq \nu_i \leq 1 \}.
\end{equation}  
Let $\Delta(\nu)=\{(i,j) \in \mathbb{Z}^2 \mid  0 \leq i \leq g, \sum_{l=1}^i \nu_{2g-l+1} \leq j <i \}$ and $d(\nu)=\sharp \Delta(\nu)$. Then, it is known that $S_{\nu}$ is equidimensional of dimension $d(\nu)$. There exists a stratification of $S$ with locally closed stratas, where the isomorphism class of the Dieudonne module $M(A_s) \otimes_{\mathbb{Z}_p} \mathbb{Q}_p$ are constant, \cite{R}, \cite{DOP}.
\end{example}
There exists analogous (dual) machinery of isocrystals in the language of $p$-divisible groups. A $p$-divisible group may be defined as an inverse system of finite group schemes $\{G_{p^{hn}} \}_n$ over a formal base scheme $S$. In this case, $h$ is called the height, and $p$ is called the characteristic of the group. In this context, two main points are highlighted:
\begin{itemize}
\item There is an anti-equivalence between the category of $p$-divisible groups over $\text{Spec}(k)$ and the subcategory of $\phi$-crystals over $\text{Spec}(k)$ consisting $(E,\phi)$ such that $p.E \subset F.M$.
\item There exists an anti-equivalence between the category of $p$-divisible groups over $\text{Spec}(k)$ up to isogeny and the full subcategory of $\phi$-isocrystals over $\text{Spec}(k)$ with slope between $0$ and $1$. 
\end{itemize}
From the above points, the classifying space of the $\phi$-isocrystals is interpreted as the deformation space of the $p$-divisible group. As a result, we have the following theorem. 
\begin{theorem}\cite{R} \label{thm:p-divisible}
Assume $S$ is a regular scheme of $char=p >0$, and $X$ is a $p$-divisible group over $S$ with constant Newton vector $\nu$. Then $X$ is isogenous to a $p$-divisible group $Y$ which admits a filtration by closed embeddings $0=Y_0 \subset Y_1 \subset ... \subset Y_r=Y$ satisfying integrality conditions \eqref{eq:integrality}. There also exists natural numbers $r_i \geq 0, \ s >0$ such that $\nu(i)=r_i/s_i$ and 
\begin{equation}
p^{-r_i}Fr^{s_i}:Y_i \to Y_i^{(\sigma^{s_i})}
\end{equation}
and $p^{-r_i}Fr^{s_i}:Y_i/Y_{i-1} \to (Y_i/Y_{i-1})^{(\sigma^{s_i})}$ are isomorphisms.
\end{theorem}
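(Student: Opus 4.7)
The plan is to reduce the statement to the corresponding assertion on the Dieudonn\'e isocrystal side and then upgrade from an isogeny decomposition to an honest filtration by closed immersions of a chosen integral model. First I would pass to the crystal/isocrystal picture: by the anti-equivalence recalled just before the theorem (and by the constancy of slopes, which takes place in the isocrystal category), the $p$-divisible group $X/S$ corresponds to a convergent $F$-isocrystal $E$ over $S$ of constant Newton vector $\nu=(\nu(1)^{n_1},\dots,\nu(r)^{n_r})$ with slopes in $[0,1]$. The sought filtration of $Y$ by closed subgroups corresponds, up to isogeny, to a filtration $0=E_0\subset E_1\subset\dots\subset E_r=E$ of sub-isocrystals with pure graded pieces of slope $\nu(i)$.

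Next I would construct this slope filtration of $E$ globally over $S$. Pointwise, Theorem 3.2 (Dieudonn\'e--Manin) gives on each geometric fibre a canonical decomposition into isoclinic components; ordering slopes decreasingly defines a pointwise filtration. The non-trivial step is to show that, under the hypothesis that $S$ is \emph{regular} and the Newton vector is \emph{constant}, this pointwise filtration is the fibre of a filtration of $E$ by sub-$F$-isocrystals which are themselves isoclinic on the nose. Here I would invoke (or reproduce) the Katz slope-filtration argument used in \cite{R}: the jumps of the Newton polygon are lower semi-continuous (this is exactly Theorem 4.1 and Corollary 4.2 applied to suitable exterior powers), so constancy forces all polygons to agree; regularity then lets one check that the kernels of the idempotents $e_{\leq\nu(i)}$ defined fibrewise extend to locally free sub-crystals, using the fact that a finitely generated submodule of a locally free sheaf over a regular base which is locally free in codimension one is locally free.

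Having produced the filtration on the isocrystal, I would pass back to $p$-divisible groups. The sub-$F$-isocrystals $E_i$ correspond, via the second anti-equivalence, to sub-$p$-divisible groups \emph{up to isogeny}. Choose a representative $Y$ in the isogeny class of $X$ as follows: replace each pointwise lattice by the smallest $W(k)$-lattice stable under $\Phi$ inside $E_i$, and glue. The regularity of $S$ again guarantees that these lattices form a locally free crystal, giving actual closed immersions $Y_{i-1}\hookrightarrow Y_i$ with isoclinic graded pieces of slope $\nu(i)=r_i/s_i$ in lowest terms. The integrality condition (43), i.e.\ $\nu(i)\cdot n_i\in\mathbb{Z}$, is automatic since each $E_i/E_{i-1}$ has rank $n_i$ and pure slope $r_i/s_i$ and must contain a $\Phi$-stable lattice. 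Finally, the map $p^{-r_i}\mathrm{Fr}^{s_i}$ being an isomorphism on $Y_i/Y_{i-1}$ is just the unwinding of the definition of slope $r_i/s_i$ on the simple summands $E_{r_i/s_i}$ in Theorem 3.2; its being an isomorphism on the whole $Y_i$ follows inductively from the short exact sequences $0\to Y_{i-1}\to Y_i\to Y_i/Y_{i-1}\to 0$ by the five lemma.

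The main obstacle will be the globalisation step: producing the filtration of sub-isocrystals from the fibrewise Dieudonn\'e--Manin decomposition is what genuinely uses both hypotheses (constancy of $\nu$ and regularity of $S$) and cannot be bypassed. Everything afterwards — the integrality of $\nu(i)n_i$, the construction of a canonical integral model $Y$, and the two isomorphism assertions for $p^{-r_i}\mathrm{Fr}^{s_i}$ — is formal once the filtration is in place. In particular one should be careful that the passage from ``isogeny filtration'' to ``filtration by closed immersions'' preserves the purity of the graded pieces, which is why the minimal $\Phi$-stable lattice inside each $E_i$ (rather than an arbitrary one) must be chosen.
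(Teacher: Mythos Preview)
The paper does not supply its own proof of this theorem: it is stated with the citation \cite{R} and no proof environment follows. There is therefore nothing in the present paper to compare your argument against.

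That said, your outline is essentially the standard route to this result, namely Katz's slope-filtration theorem for $F$-crystals combined with Dieudonn\'e theory, and it correctly identifies the crux: globalising the fibrewise Dieudonn\'e--Manin decomposition into a filtration by sub-isocrystals over the regular base with constant Newton polygon. One technical point to be careful with is the direction of the slope ordering. For $p$-divisible groups the closed sub-$p$-divisible group corresponds (under the contravariant Dieudonn\'e functor) to a quotient crystal, so to obtain a filtration by \emph{closed immersions} $Y_{i-1}\hookrightarrow Y_i$ one must take the slope filtration on the crystal side by \emph{increasing} slopes (i.e.\ the Newton filtration), not decreasing as you wrote; otherwise one produces a cofiltration by quotient $p$-divisible groups instead. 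The remaining steps --- integrality of $\nu(i)n_i$, the five-lemma induction for $p^{-r_i}\mathrm{Fr}^{s_i}$ --- are indeed formal once the filtration is in place.
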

The superfix in Theorem means the fixed element under the action. If $X$ is a $p$-divisible group of height $h$ over $S$ of characteristic $p$ s.t \eqref{eq:integrality} holds, we can associate a lisse $p$-adic sheaf of $W(\mathbb{F}_p)$-modules $\mathcal{V}_X=\ \displaystyle{\lim_{\leftarrow}} \mathcal{V}_{X,n}$ for the \'etale topology on $S$ by
\begin{equation}
\mathcal{V}_{X,n}=\{x \in M/p^nM\ \mid p^{-r}F^s(x)=x\}
\end{equation}
where $M$ is the Dieudonne crystal of $X$. The fibers of $\mathcal{V}_X$ are free $W(\mathbb{F}_p)$-modules of rank $h$. The corresponding $W(\mathbb{F}_p) \otimes_{\mathbb{Z}_p} \mathbb{Q}_p$-adic sheaf depends on the isogeny class of $X$ and corresponds to a representation of the fundamental group,
\begin{equation}
\varrho_X:\pi_1(S) \to Gl_h(W(\mathbb{F}_p) \otimes_{\mathbb{Z}_p} \mathbb{Q}_p).
\end{equation}
The stratification in Theorem \ref{thm:p-divisible} decomposes the representation $\varrho=\oplus_i \ \varrho_i$ by the blocks of size $h_i=\text{height}(Y_i/Y_{i-1})$, cf. \cite{R}.

\begin{remark} \cite{AKT, Sch} \label{rem:analytic-moduli} In this text whenever we talk about the period domain $\mathcal{D}$ we are concerning its underlying analytic space $\mathcal{D}^{an}$ or what is called its analytification. We omit the suffix "an" in most cases. There is a period map $\mathcal{P}: \mathcal{M} \to \mathcal{D}$ where $\mathcal{M}$ is the moduli space of p-divisible groups isogenous to a fixed p-divisible group or by what we said the moduli of $\phi$-isocrystals. We will not deal with this period map, which is associated with a moduli problem on p-divisible groups or similar for $\phi$-isocrystals. In p-adic Hodge theory, there are several notions of period maps. We will come back to this notion again in the next sections. In section \ref{sec:period-map} we introduce a local period map $\Pi:U \to \mathcal{D}$, defined on a local disc $U/L$ which is well defined on a universal cover of $U$. This period map factors trivially through $\Pi$ in the above. Our purpose in this section was to make the definition of period domains in the p-adic setting more precise and set our notations.
\end{remark}

\subsection{Mumford GIT stability}\label{sec:stability}
We propose to study the semistability conditions for the points on the period domains. Below, we define this concept and explain its relation to geometric invariant theory. Finally, we will use a GIT criterion of semistability due to D. Mumford to obtain an asymptotic property of nilpotent orbits. 

If $(V,F)$ is a filtered $L$-vector space. Then, we define the degree and the slope of the filtration $F$ by,
\begin{equation} 
\deg(V,F)=\sum_i i .\dim gr_F^i(V), \qquad \mu(V,F)=\deg(V,F)/rank(V,F),
\end{equation} 
respectively. The degree is an additive function on the category of filtered vector spaces, and also one has 
\begin{itemize} 
\item $\deg(V)=\deg(\bigwedge^{max}V)$,
\item $\mu(V \otimes W)=\mu(V)+\mu(W)$,
\item $\mu(V^*)=-\mu(V)$. 
\end{itemize} 
We employ the following notion of semistability for a filtration of an $\phi$-isocrystal.
\begin{definition} 
A filtration $F^{\bullet}$ of an isocrystal $M$ is called semistable if $\mu(N,F^{\bullet}) \leq \mu(M,F^{\bullet})$ for any subisocrystal $N$ of $M$.
\end{definition}
Assume that the filtration $F$ of $V$ is split semisimple, that is $V=\bigoplus_i  V_i \ $ with $V_i$ of pure type. This decomposition corresponds to a $1$-parameter subgroup (1-PS) of $G=GL(V)$ namely,
\begin{equation} 
\lambda:\mathbb{G}_m \to SL(V).
\end{equation} 
By considering a suitable basis $\{s_0,...,s_n\}$ of $V$ we can write $\lambda $ diagonally as 
\begin{equation}
\lambda(t)=diag[t^{\rho_0},...,t^{\rho_n}]t^{-\rho}, \qquad \rho_0 \geq ...\geq \rho_n, \ \rho=\sum \rho_i/(n+1). 
\end{equation} 
Mumford introduced a subsheaf 
\begin{equation} 
{L}(\nu) =(t^{\rho_0}s_0,...,t^{\rho_n}s_n) \subset \mathcal{O}_{X \times \mathbb{A}^1}(1)
\end{equation} 
generated by the sections $t^{\rho_i}s_i$. D. Mumford gives an equivalent formulation of semistability by defining a GIT slope $\mu^{L(\nu)}(F,\lambda)$ for a point $F \in \mathcal{D}(\nu)$ and the 1-PS $\lambda$, where $L(\nu)$ is a line bundle on $\mathcal{D}(\nu)$ associated to $\nu$; see \cite{DOP}. The following theorem expresses a GIT criterion for semistability.
\begin{theorem} \cite{DOP}
The point $F \in \mathcal{D}(\nu)$ is semistable if and only if for all 1-PS $\lambda$ of $GL(n)$, 
\begin{equation}
\mu^{L(\nu)}(F,\lambda) \geq 0.
\end{equation}
\end{theorem} 
Let us denote $X=G/P$ and $x=F$, where $G=GL(n)$. Also, let,
\begin{equation}
\mu : X \to \mathfrak{g}=\text{Lie}(G)
\end{equation}
be the moment map, associated with the action of $G$ on $\mathcal{D}(\nu)$, \cite{VRS, L, ZA}. The properties of the nilpotent orbits in $X$ can be studied via the map $\mu$. One defines a function,
\begin{equation}
W_{\mu}(x,\zeta)=\langle \mu(\exp(\zeta. \chi_z).x),\zeta \rangle, \qquad x \in X , \ \zeta \in \mathfrak{g} \backslash {0}
\end{equation}
called the $\mu$-weight of the pair $(x, \zeta)$, where $\langle . , . \rangle$ is an inner product in $\mathfrak{g}$. The semistablity of $F$ can be studied via the critical points of $W_{\mu}(x,\zeta)$. Moreover, it is related to orbit closures by the Kempf-Ness theorem presented next.
\begin{theorem} (Kempf-Ness Theorem) \cite{VRS} \label{th:Kempf} 
The point $x$ is semistable if and only if one of the following equivalent conditions holds:
\begin{equation}
\text{the orbit}\ G.x \ \text{is closed} \ \ \Leftrightarrow \ \ G. x \cap \mu^{-1}(0) \ne \varnothing   .
\end{equation} 
\end{theorem}
Moreover, it is easy to show that if $F \in \mathcal{D}(\nu)$ and $F_0 \in G.x_0$ then there exists an $N$ such that $\exp(N. \chi_z).x_0=F$. Therefore, in Theorem \ref{th:Kempf}, one needs to consider the one parameter orbits in the first left criteria. The reader may find various other formulations and numerical invariants in the reference; see also \cite{L, ZA}. 
\subsection{$p$-adic Fourier correspondence}\label{sec:fourier}
The material in this section are quite well known in p-adic Hodge theory. The references are \cite{A}, \cite{BSX}, \cite{ST}. Let $L$ be a $p$-adic field with a ring of integers $\mathcal{O}_L$. Y. Amice \cite{A} gives a representation-theoretic interpretation of the p-adic unit disc. He shows that the geometry of the $p$-adic unit disc $B_L(\mathbb{C}_p)$ can be explained by the analytic characters of $\mathcal{O}_L$ via Fourier analysis. 
\begin{theorem}\cite{ST} \label{thm:amice}
There is an isomorphism 
\begin{equation} \label{eq:fourier-corres}
B(\mathbb{C}_p) \  \stackrel{\cong}{\longrightarrow} \{ \mathbb{Q}_p \ \text{analytic characters} \ \mathcal{O}_{\mathbb{Q}_p} \to \mathbb{C}_p \} =: \widehat{\mathcal{O}_{\mathbb{Q}_p}}
\end{equation}
\begin{center}
$z \longmapsto \kappa_z(x):=(1+z)^x$
\end{center}
where, $\widehat{\mathcal{O}_{\mathbb{Q}_p}}=Hom_{an}(\mathcal{O}_{\mathbb{Q}_p}, \mathbb{C}_p^{\times})$.
\end{theorem}
We have stated the Fourier isomorphism over $\mathbb{Q}_p$ as stated by Amice \cite{A}, where the converse map is given by $\chi \mapsto \chi(1)$,  \footnote{A similar isomorphism can be stated over a $p$-adic field $L$. However, the specific formula stated in (43) does not exist over finite extensions of ${\mathbb{Q}_p}$, \cite{ST, BSX},}. We can write $\kappa(z)=\kappa_z$ by a formal power series, 
\begin{equation}
\kappa(z)=1+\alpha.z+... \in z. \mathcal{O}_{\mathbb{Q}_p}[[z]]
\end{equation}

The element $\alpha \in \mathcal{O}_{\mathbb{Q}_p}$ depends on parametrization. Different choices of parameters will correspond to multiplication by an element in $\mathcal{O}_{\mathbb{Q}_p}$ on $\alpha$. \footnote{One should not confuse the two notations $\mathcal{O}_B$ and $\mathcal{O}_L$. There exists a topological duality between them.} 

The Amice isomorphism in (43) is related to the p-adic Fourier transform, which we explain over an arbitrary p-adic field $L$. There is a Fourier transform,
\begin{equation}
\mathcal{D}:D(\mathcal{O}_L,\mathbb{C}_p) \stackrel{\cong}{\longrightarrow} {\mathcal{O}_B}, \qquad \lambda \mapsto F_{\lambda}
\end{equation}
where $D(\mathcal{O}_L,\mathbb{C}_p)=C^{an}(\mathcal{O}_L,\mathbb{C}_p)'$ (topological dual), $\mathcal{O}_B$ is the ring of analytic functions on the disc $B$ and $F_{\lambda}$ is defined via the correspondence (19) by
\begin{equation}
F_{\lambda} : \widehat{\mathcal{O}_L}(\mathbb{C}_p) \to \mathbb{C}_p, \qquad \chi \mapsto \lambda(\chi).
\end{equation}
The Fourier transform also provides a pairing
\begin{equation}
\{\ ,\ \}:\mathcal{O}(B/\mathbb{C}_p) \times C^{an}(\mathcal{O}_L) \to \mathbb{C}_p.
\end{equation}
The space of analytic functions $f:\mathcal{O}_L \to \mathbb{C}_p$ has the structure of a Hilbert space with respect to the above pairing. The following formulas hold for $\{\ ,\ \}$:
\begin{itemize}
\item $\{1 ,f\}=f(0)$.
\item $\{F,\kappa_z \}=F(z)$.
\item $\{F ,\kappa_z .f \}=\{F(z+\ ) ,f \}$.
\item $\{F ,f(a.\ ) \}=\{F \circ [a] ,f \}$. 
\item $\{F ,f' \}=\{\Omega \log F  ,f \}$ 
\item $\{F ,xf(x ) \}=\{\Omega \partial F ,f \}$
\item $\{F ,P_m(\ \Omega) \}=(1/m!)d^mf/dz^m(0)$
\end{itemize}
\begin{theorem}\cite{ST}
Any analytic function $f:\mathcal{O}_L \to \mathbb{C}_p$ has a unique representation in the form
\begin{equation}
f=\sum_{n=0}^{\infty} c_n P_n(\ . \alpha)
\end{equation}
where $c_n=\{f , z^n\}$. Furthermore, such a series is convergent provided that there exists a real number $r$ such that $\mid c_n \mid r^n \to 0$ as $n \to \infty$. 
\end{theorem}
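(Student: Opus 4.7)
The plan is to set up a biorthogonal duality between the monomial basis $\{z^n\}_{n\geq 0}$ of $\mathcal{O}(B)$ and the family $\{P_n(\,\cdot\,\Omega)\}_{n\geq 0}\subset C^{an}(\mathcal{O}_L)$ under the pairing $\{\cdot,\cdot\}$, and then read off the claimed expansion as a dual-basis expansion. The central computation would come from specializing the seventh of the pairing formulas above to $F=z^m$, which gives
\[
\{z^m,\,P_n(\,\cdot\,\Omega)\}=\frac{1}{n!}\frac{d^n}{dz^n}(z^m)\bigg|_{z=0}=\delta_{mn}
\]
(with the $f$ on the right-hand side of that formula read as a typographical slip for $F$, the only interpretation compatible with the remaining identities). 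This biorthogonality simultaneously identifies the coefficient-extraction functional: any convergent expansion $f=\sum c_n P_n(\,\cdot\,\Omega)$ must have $c_m=\{z^m,f\}$, matching the theorem's coefficient formula up to the transposed order of arguments.

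With this in place uniqueness is immediate. For existence I would set $c_n:=\{z^n,f\}$ and verify $f=\sum c_n P_n(\,\cdot\,\Omega)$ by pairing each side against an arbitrary $F=\sum a_m z^m\in\mathcal{O}(B)$: continuity of the pairing in its first slot together with Taylor convergence of $F$ give $\{F,f\}=\sum_m a_m\{z^m,f\}=\sum_m a_m c_m$, while biorthogonality yields the same value when $F$ is paired with the proposed series. Nondegeneracy of the pairing---inherited directly from the Fourier isomorphism $D(\mathcal{O}_L,\mathbb{C}_p)\cong\widehat{\mathcal{O}_L}\cong\mathcal{O}(B)$ of Theorem 5.3---then forces the identity in $C^{an}(\mathcal{O}_L)$.

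The hard part will be the quantitative convergence statement. What is needed are sup-norm estimates of the shape $\|P_n(\,\cdot\,\Omega)\|_{\mathcal{O}_L}\leq C\rho^{-n}$ with $\rho\in(0,1]$ controlled by $|\Omega|$, together with a matching lower bound, so that the condition $|c_n|r^n\to 0$ for some appropriate $r$ is both sufficient and necessary for uniform convergence of the series on $\mathcal{O}_L$. These rest on the power-series expression $\kappa_z=1+\Omega(t')z+\cdots$ of (62) and the precise asymptotics of the Lubin-Tate period $\Omega$, worked out in \cite{ST} via the radii of convergence of the Lubin-Tate logarithm and exponential. I would invoke those estimates as a black box rather than reproduce them, since this is precisely the input the paper wishes to cite from Schneider-Teitelbaum.
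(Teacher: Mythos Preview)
The paper does not supply its own proof of this statement: Theorem~5.4 is cited verbatim from \cite{ST} and used as a black box, so there is no in-paper argument to compare your proposal against. Your sketch is a faithful outline of the Schneider--Teitelbaum argument itself: the biorthogonality $\{z^m,P_n(\,\cdot\,\Omega)\}=\delta_{mn}$ is exactly their mechanism for generalized Mahler expansions, and the convergence criterion is indeed governed by the growth estimates on $\|P_n(\,\cdot\,\Omega)\|$ coming from the valuation of the Lubin--Tate period $\Omega$, which you rightly flag as the nontrivial analytic input to be imported from \cite{ST} rather than reproved.
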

Define the norm $\parallel . \parallel$ on the space of power series by $ \parallel f\parallel_{a,n}=\displaystyle{\max_{z \in  a+\pi^n \mathcal{O}_L}} \mid f(z) \mid $. 
\begin{proposition} \cite{ST} \label{thm:fourier-estimate}
Consider the Fourier expansion of the function 
\begin{center}
$\ \exp(. \ \alpha \log z)=\sum P_m(. \ \alpha)z^m$ 
\end{center}
There are constants $C_1$ and $L$ such that 
\begin{equation} \label{eq:fourier-bound}
\parallel P_l(y. \alpha) \parallel_{0,n} <C_1 p^{-k.l/q(n)} , \qquad n \geq 1
\end{equation}
where $q(n) \to \infty$ as $n \to \infty$. 
\end{proposition}

The Fourier expansion for $H(x,y)=\exp(y \log x)$ has a simple form that looks like the Taylor expansion in analysis. The function $H(x,y)=\exp(y \log x)$ is a rigid analytic function on $B(r)^2$ for $r$ sufficiently small. In fact the Fourier expansion for $H(x,y)$ has a simple form that looks like the Taylor expansion in analysis. We have the power series expansion $\exp(y \log(z))=\displaystyle{\sum_{n=0}^{\infty}P_n(y)z^n}$, where the polynomials $p_n(y)$ satisfy the following properties,
\begin{itemize}
\item $p_0(y)=1, \ p_1(y)=y$.
\item $p_n(0)=1, \ n \geq 1$.
\item $\deg(p_n)=n$ and the leading coefficient of $p_n$ is $1/n!$.
\item $p_n(y+y')= \sum_{i+j=n}p_i(y)p_j(y')$
\item $p_n(\partial).f(x)|_{x=0}=(1/n!)(d^nf/dx^n)|_{x=0}$, where $f(x) \in \mathbb{C}_p[[x]]$
\end{itemize}
The last property can be obtained by a comparison to the formal Taylor series; let $\delta=d/dx$ then the Taylor formula reads as  $\exp(\delta b)h(a)=\sum_n \dfrac{\delta^n}{n!}h(a)b^n=h(a+b)$. Inserting $a=\log(x), \ b=\log(y), \ h=f \circ \exp$ one gets $\exp(\partial\log(y))f(x)=f(x+y)$. Therefore, the orbit limit we are going to compute with is an analogue of formal Taylor series expansion, \cite{ST, BSX}.

We may assume $r$ the radius of the disc $B(r)$ is small enough such that $\log:B(r) \to B(r)$ is an isomorphism, and we have
\begin{center} \label{diag:log}
$\begin{CD}
B(r) @>{\cong}>> B(r) \\
@V{z \to \kappa_z}VV @VV{z \mapsto \Omega .z}V \\
\widehat{\mathcal{O}}_L(r \Omega) @>>{\kappa_z \to \log \kappa_z(1)}> B(r \mid \Omega \mid )
\end{CD}$
\end{center}
The $\log$ is the logarithm of the associated formal group law. To explain this diagram, first note that the map $z \mapsto \kappa_z$ maps $B(r) \to \widehat{\mathcal{O}_L}(r \mid \Omega \mid )$ and is a rigid analytic isomorphism. Thus, the diagram says under the isomorphism $B(\mathbb{C}_p) \stackrel{\cong}{\rightarrow} \widehat{\mathcal{O}}_L(\mathbb{C}_p)$ the logarithm functions are compatible, see \cite{ST} and \cite{BSX} for details.

We shall use the p-adic Fourier correspondence to obtain a  representation-theoretic interpretation of the points of the p-adic disc. Together with the Sen theorem mentioned before, this formulation enables us to obtain a new presentation of the twisted period map associated with variation p-adic Hodge structure. Then the inequality \eqref{eq:fourier-bound} will provide estimates appearing in the nilpotent orbit theorem. 

\subsection{Period map} \label{sec:period-map}
This section assumes $L/ \mathbb{Q}$ is a number field. Let $S$ be a smooth $L$-variety, and $f:\mathfrak{X} \to S$ a proper smooth morphism. Suppose that this admits a good model over the ring of integers of $L$, i.e. it extends to a proper smooth morphism $f : \mathcal{X} \to \mathcal{S}$ of smooth schemes over the ring of integers $\mathcal{O}_K$. 
We denote by $\rho_s$ the representation of the
Galois group $G_L$ on the \'etale cohomology group of $\mathfrak{X}_s$:
\begin{equation} \label{eq:rep-etale} 
\rho_s: G_L \to Aut \left (H^q_{et}(X_s \times_L \overline{L}, \mathbb{Q}_p) \right ).
\end{equation}
Consider the assumptions and notations mentioned in the introduction before section \ref{sec:contributions}.

Fix $s_0 \in  \mathcal{S}$. 
Let $V = H^q_{dR}(\mathfrak{X}_{s_0}/L)$. We denote by $V_v$ and $V_{\mathbb{C}}$ the $L_v$- and $\mathbb{C}$-vector spaces obtained
by $\otimes_L L_v$ or $\otimes_{(L,\iota)}\mathbb{C}$. Then $V_{\mathbb{C}}=H^q_{dR}(\mathfrak{X}_{s_0},\mathbb{C})$, which is also (by the comparison theorem) identified with the singular
cohomology $H^q_{sing}(\mathfrak{X}_{s_0},\mathbb{C})$. In particular, monodromy defines a representation $$\varrho : \pi_1(S_{\mathbb{C}}, s_0) \to GL(V_{\mathbb{C}}), \qquad \Gamma:=  \overline{\text{Image}(\varrho)}$$
where the overline means Zariski closure. Then $\Gamma$
is an algebraic subgroup of $GL(V_{\mathbb{C}})$ called the monodromy group at $s_0$. Both $V_{\mathbb{C}}$ and $\Gamma$ depend on the choice of
archimedean place $\iota$.

The Gauss-manin connection provides the following two identifications 
\begin{equation} 
\begin{aligned} 
GM_v &: H^q_{dR}(\mathcal{X}_{s_0} /L_v) \cong H^q_{dR}(\mathcal{X}_s/L_v),\\ 
GM_{\iota} &: H^q_{dR}(\mathfrak{X}_{s_0},\mathbb{C})\cong H^q_{dR}(\mathfrak{X}_s,\mathbb{C})
\end{aligned}
\end{equation}
when $s$ is sufficiently close to $s_0$. In the coordinates of a specific basis, $GM_v$ is given by a matrix with entries
$\mathcal{O}_{L_v}[[z_1, \dots , z_m]],$ convergent in the regions noted above.
The fiber over the $\mathcal{O}_{L_v}$-point $s_0$ of $\mathcal{S}$ gives a smooth proper model $\mathcal{X}_{s_0}$ for $\mathfrak{X}_0$. We have identifications 
\begin{equation} \label{eq:p-adic-isom}
H^q_{dR}(\mathcal{X}_s/L_v) 
 \cong H^q_{dR}(\mathcal{X}_{s_0}/L_v) \cong H^q_{cris}(\mathcal{X}_{s_0}) \otimes_{\mathcal{O}_{L_v}} K_v, \qquad s \in U
\end{equation} 
where, $H^q_{cris}$ is the crystalline cohomology of $\mathcal{X}_{s_0}$, see \cite{Ber, BO}. 
The crystalline cohomology is equipped with a Frobenius operator $\phi_v$, which is semilinear with respect to the Frobenius on the unramified extension $L_v/\mathbb{Q}_p$. By
the isomorphisms \eqref{eq:p-adic-isom}, this $\phi_v$ acts on $H^q_{dR}(\mathcal{X}_y/L_v)$ and $H^q_{dR}(\mathcal{X}_{s_0} /L_v)$ as well, in a
manner compatible with the map $GM_v$.

Assume now $V = H^q_{dR}(\mathfrak{X}_{s_0}/L)$ is equipped
with a Hodge filtration: $$F_0^{\bullet}: \  F^0 \varsupsetneq F^1 \varsupsetneq \dots \ .$$ Let $\mathcal{D}$ be the $L$-variety parameterizing flags in $V$ with the same dimensional data as $F^{\bullet}$,
and let $F_0 \in \mathcal{D}(K)$ be the point corresponding to the Hodge filtration on $V$ we initiated in the above.
With base changing by means of $v$ and $\iota$, we get a $L_v$-variety $\mathcal{D}_v$ and a $\mathbb{C}$-variety $\mathcal{D}_C$. We
denote by $F_{\iota,0} \in \mathcal{D}_{\mathbb{C}}$ the image of $F_0$.
Let $U_{\mathbb{C}}$ be a contractible analytic neighbourhood of $s_0 \in S_{\mathbb{C}}^{an}$. The Gauss-Manin
connection defines an isomorphism $H_{dR}(\mathfrak{X}_s/\mathbb{C}) \cong  H_{dR}(\mathfrak{X}_{s_0}/C)$ for each $s \in U_\mathbb{C}$. In
particular, the Hodge structure on the cohomology of $\mathfrak{X}_s $ defines a point of $\mathcal{D}_{\mathbb{C}}$; this
gives rise to the complex period map

$$\Pi_{\mathbb{C}} : U_{\mathbb{C}} \to \mathcal{D}_{\mathbb{C}}, \qquad 
U_{\mathbb{C}} = \{z: \|z_i\|_{\mathbb{C}} < \epsilon \}.$$

Indeed, $\Pi_{\mathbb{C}}$ extends to a map from the universal cover of $S^{an}_{\mathbb{C}}$ to $\mathcal{D}_{\mathbb{C}}$ and this map is
equivariant for the monodromy action of $\pi_1(S^{an}_{\mathbb{C}} , s_0)$ on $\mathcal{D}_{\mathbb{C}}$. 
Then we have the containment 
$$\Gamma. F_{\iota,0} \subset  \text{the Zariski closure of} \ \Pi_{\mathbb{C}}(U_{\mathbb{C}})$$ 
inside $\mathcal{D}_{\mathbb{C}}$.
For a $v$-adic analog, take $s \in U_v$, then again Gauss-Manin connection allows one to identify the Hodge filtration on $H^q_{dR}(\mathcal{X}_s/L_v)$ with a
filtration on $H^q_{dR}(\mathcal{X}_s/L_v)$, and thus with a point of $\mathcal{D}_{L_v}$. This gives rise to a $L_v$-analytic function
$$\Pi_v : U_v \to \mathcal{D}_{L_v}, \qquad U_v = \{z : \|z_i\|_v < \epsilon \}.$$
In this case also, the image of the orbit of $F_{v,0}$ under the action of (the same) $\Gamma$ is contained in the Zariski closure of the image of the period map $\Pi_v$, cf. \cite{LV}
$$\Gamma. F_{v,0} \subset  \text{the Zariski closure of} \ \Pi_{v}(U_v).$$

To relate the above construction to the representation $\rho_s$ in \eqref{eq:rep-etale}, One uses p-adic Hodge theory, \cite{BC, Fon}.
For each $s \in  U_v$, the representation $\rho_{s}$ is crystalline upon restriction to $L_v$,
because of the existence of the model $\mathcal{X}_s$ for $X_s$. By p-adic Hodge theory, there is [8,
Proposition 9.1.9] a fully faithful embedding of categories

$$\text{crystalline representations of}\ G_{L_v}\  \hookrightarrow \ \text{filtered}\ \phi{-}\text{isocrystals}$$
On the other hand, by the crystalline comparison theorem of Faltings \cite{Fal}, the aforementioned embedding carries $$\rho_{s,v} \longmapsto E_s=\left (H^q_{dR}({X}_s/L_v), \phi_v, \text{Hodge filtration for} \ {X}_s \right ).$$ Combining the comparison isomorphism by the above embedding we deduce that the representation $\rho_{s,v}$ gives rise to a filtered $\phi$-isocrystal $E_s$. 

The conclusion is that in case the fibration $\mathfrak{X} \to S$ is defined over a local field, one can consider the period map $\Pi$ to be defined on local discs $U=B(r)$ in the variety $S$. For the possibility of the existence of degenerations, we generally may assume that the disc is punctured at $0$. Then the period map $\Pi$ is well-defined on a universal cover $\widehat{B'(r)}$ of the punctured disc.
%
\section{Main results} \label{sec:main-result}

One of the significant concepts of Hodge theory is the degeneration of a Hodge structure in a family of projective algebraic varieties $\mathfrak{X} \to S$ over a pointed curve $(S,0)$ defined over a p-adic field $L$. Denote $S^*=S \setminus \{0\}$. Here we assume $X$ and $S$ are quasi-projective varieties. Moreover, we may assume the generic fiber of the family is smooth, and the exceptional fiber of the family is a normal crossing. We are interested in understanding the limits of Hodge structure on the \'etale (or crystalline) cohomologies of the generic fibers $\mathfrak{X}_s$. A classical idea to study the degeneration of the Hodge structure is to consider the nilpotent orbits. One can present this by the nilpotent orbits of the points in period domains of HS of the type appearing for \'etale or crystalline cohomology of a generic fiber $H_{et}^k(X_s)$. We first define an analog of nilpotent orbit for the p-adic HS. Then, we prove a limit semi-stability property for the nilpotent orbits in p-adic case. The result is an analog of that of W. Schmid's nilpotent orbit theorems \cite{KP, P, GS} in complex Hodge structures. 

Consider a family of projective algebraic varieties $\mathfrak{X} \to S$ over a pointed curve $(S,0)$ defined over a p-adic field $L$. Let $\Pi: \tilde{S^*} \to \mathcal{D}$ be the period map associated to this fibration as constructed in \ref{sec:period-map}. We can extend this map by base change over $\mathbb{C}_p$. Denote by $\Pi= \Pi_{\mathbb{C}_p}$ the same map when the coefficients are extended. The domain $\mathcal{D}$ can be regarded as the classifying space of Hodge-type filtrations $F^{\bullet}$ of the same type $(\nu)$ on the filtered isocrystal $E=(V, \phi, F)$ associeted to the de Rham or \'etale cohomology of a fixed generic fiber, in the sense explained in \ref{sec:period-map}. We can assume $E$ is equipped to a nilpotent transformation $N$, see sections \ref{sec:period-domain} and \ref{sec:phi-N-modules}.

\begin{definition} (Nilpotent orbit) \label{def:nilp-orbit}
Let $F_0 \in \mathcal{D}$ be an arbitray point. The nilpotent orbit of $F_0$ associated to $N$, is an analytic map $\eta: \tilde{S} \to \mathcal{D}$ given by,
\begin{equation}
\eta(s)=\exp[N\log(s)]F_0, \qquad F_0 \in \mathcal{D}.
\end{equation}
such that the following conditions are satisfied:
\begin{itemize}
\item $\eta(s) \in \mathcal{D}^{ss}$ for $s \to \infty $.
\item $N \circ \phi=p^r. \phi \circ N$, for some $r$.
\end{itemize}
\end{definition}

Making the limit in the above definition makes sense because the ground variety, $\mathcal{D}(\nu)$, is an analytic space and carries a natural (non-archimedean) metric. In the following theorem we have used the notations of section \ref{sec:fourier}.
\begin{theorem}[p-adic nilpotent orbit theorem] \label{thm:nilpotent-orbit}
In the above settings, consider the local disc $B(r)$
at $0$ on $S$. Consider the local period map $\Pi$ on $B'(r)=B(r) \setminus \{0\}$. Define, 

\begin{equation}\label{eq:th_eq}
\eta(\kappa_{z})= \exp [N \log(\kappa_z)] \Pi( \kappa_{z}).
\end{equation}
to be Then, the followings are true:
\begin{itemize}
\item[(1)] The limit $F_{\infty}:=\displaystyle{\lim_{\kappa_{z} \to 0} \eta(\kappa_z)}$ exists and is a semistable filtration of $V$. 
\item[(2)] $\xi(\kappa_{z})= \exp [N \log(\kappa_z)] F_{\infty}$ is a nilpotent orbit which is asymptotic to the period map $\Pi$.
\item[(3)] For each non-archimedean metric\footnote{A non-archimedean metric is a metric that satisfies the following property known as the triangle axiom $d(\xi, \theta) \leq max [d(\xi, \zeta),d(\zeta,\theta)]$ for any 3 points $\xi,\ \zeta, \theta$ in the space. } $d$ on $\mathcal{D}^{an}$, there exists constants $C$, $L$, $l$ and $r >0$ such that 
\begin{equation}
d(\xi(\kappa_z), \eta( \kappa_z)) <C. p^{-k.l/q(n)}r^n, \quad \forall n ,
\end{equation}
where $q(n) \to \infty$ as $n \to \infty$.
\end{itemize}
\end{theorem}
\begin{proof} 
The group $G=GL(n, \mathbb{C}_p)$ acts transitively on $\mathcal{D}$. We fix a point $F_0 \in \mathcal{D}$. Any other point (for instance $\Pi(\kappa_z)$) in $\mathcal{D}$ can be written as $\varrho(g)F_0$ for some representation $\varrho:G_L \to GL(V_{\mathbb{C}_p})$. By Sen theorem \cite{SE}, we write 
\begin{equation}\label{eq:period-param}
\Pi(\kappa_z)=\exp[\mathfrak{n}( \chi (z))\log \chi (z)]F_0
\end{equation}
locally on $ \mathcal{D}$, where $\chi(z)$ is the cyclotomic character appearing in the Sen theorem, and its variable depends on $z$ continuously. Then, the function $\mathfrak{n}( \chi(z))$ is also locally analytic. We plug \eqref{eq:period-param} into the orbit function \eqref{eq:th_eq},
\begin{equation} \label{eq:twisted-period}
\eta(\kappa_z)= \exp [N \log(\kappa_z)] \exp[\mathfrak{n}( \chi (z))\log \chi(z)]F_0.
\end{equation}
By Proposition \ref{thm:fourier-estimate}, it can be written as the product of two series of the form $\ \sum_m {P}_m z^m \ $ with coefficients to be polynomial functions in finite matrices. Therefore, if the variables in \eqref{eq:twisted-period} are chosen so that $||\kappa_z||$ and $||\mathfrak{n}(\chi_z)||$ are small enough (see \cite{ST}). Then, the uniform bound in \eqref{eq:fourier-bound} will hold for both of the 1-parameter orbits on the right-hand side of \eqref{eq:twisted-period}. This shows that $\eta(\kappa_z)$ is convergent. If $d$ is a non-archimedean metric on the flag variety, then for a suitable constant $C$,
\begin{equation} \label{eq:distance-bound}
d(\xi(\kappa_z), \eta( \kappa_z)) 
\leq C. max (\|\exp [N \log(\kappa_z)]\|, \|\exp [N \log(\kappa_z)] \exp[\mathfrak{n}( \chi(z))\log \chi(z)]\|) 
\end{equation}
Now, the estimates for Fourier coefficients in the Proposition \ref{thm:fourier-estimate} will hold for \eqref{eq:distance-bound} by the non-archimedean property, which implies the inequality in item (3) and also the existence part in item (1). The Kempf-Ness Theorem states that the semistability of $F_{\infty}$ is equivalent to the statement that: the orbits $\lambda(\xi_z).F_{\infty}$ all are closed. That means they contain their limit points. It will suffice to show that the nilpotent orbits of the points $F_{\infty}$ belong to $G.F_{\infty}$. For a general $g \in GL(V)$ let,
\begin{equation}
\hslash(g):=\lim_{\kappa_z \to 0} \exp[-N'\log \kappa_z].g.\exp[N'\log \kappa_z] \ \in G,
\end{equation}
with $N'$ nilpotent. This limit exists, cf. \cite{VRS}. Now if 
$\lim_{\kappa_z \to 0} \exp [N'\log \kappa_z] .F_0=F_{\infty}^{'}$, then choose $g \in G$ such that, $g.F_{\infty}=F_{\infty}'$. We have, 
\begin{equation}
\begin{aligned}
\hslash(g) F_0 &=\lim_{\kappa_z \to 0} \exp[-N'\log \kappa_z].g.\exp[N'\log \kappa_z]. F_0 \\
&=\lim_{\kappa_z \to 0} \exp[-N'\log \kappa_z].g.F_{\infty}'=\lim_{\kappa_z \to 0} \exp[-N'\log \kappa_z].F_{\infty} .
\end{aligned}
\end{equation}
We can interpret the last equation so that the limits $\lim_{\kappa_z \to 0} \exp[-N'\log \kappa_z].F_{\infty}$ has the form $G.F_{\infty}$. That is what is expected to prove. The semistability of $F_{\infty}$ follows from the Kempf-Ness theorem \ref{th:Kempf}.
\end{proof}
We call the filtration $F_{\infty}$ the limit slope filtration in analogy to the complex case of limit Hodge filtration. The convergence of the one-parameter orbit of the period map in \eqref{eq:th_eq} over a homogeneous manifold is a general fact that is formally based on the GIT over homogeneous manifolds. These orbits can be explained as gradient flows of certain Morse functions defined via the inner product or the metric of $V$, [see \cite{VRS}]. The statements of the Theorem \ref{thm:nilpotent-orbit} are the non-archimedean analog of the Schmid nilpotent orbit theorem \cite{S, KP, P}. From the proof of Theorem '\ref{thm:nilpotent-orbit}, it can be understood that a similar statement holds for nilpotent orbits in several variables,
\begin{equation}
\eta(\kappa_{z_1},...,\kappa_{z_r}):=\exp [ N_1 \log(\kappa_{z_1})+...+N_r\log(\kappa_{z_r})]\Theta(\kappa_{z_1},...,\kappa_{z_r}),
\end{equation}
where $N_i$ are commuting nilpotents; they also exist and satisfy similar estimates. The different limits corresponding to the choice of elements in the nilpotent cone define boundary points in the period space $\mathcal{D}(\nu)^{ss}$. 
\begin{remark}
In Theorem \ref{thm:nilpotent-orbit} and its proof, the limit can be understood in the $p$-adic unit disc or its covering by the exponential map. On the disc, the limit goes $\kappa_z \to 0$; however, going upstairs in the \ref{diag:log}, we have $\log \kappa_z \to \infty$ in a formal way. The meaning of the limit has to be understood via the Amice-Schneider-Teitelbaum canonical (Fourier) isomorphism encountered in Theorem \ref{thm:amice}. That is $\kappa_z \to 0$ means $z \to 0$ in the preimage (left side) of \eqref{eq:fourier-corres}. We have formulated the asymptotic by the limits of cyclotomic characters instead of usual numbers because of the theorem's compatibility with other facts in the $p$-adic Hodge theory. The correspondence in Theorem \ref{thm:amice} is formally written in the unit disc around $0$, but it corresponds to analytic characters that are functions around the point $1$. This phenomenon is formal, and we can even subtract the values of $\kappa_z$ by $1$. In $p$-adic Fourier theory, the characters on the right side of \eqref{eq:fourier-corres} will be considered around $0$ as the conclusion, \cite{ST, A, BSX}.
\end{remark}
\begin{remark}
One can show that the function $\psi(t)=<F_t,F>$ is a Morse function and is convex along the flows of the $1$-parameter family $F_t$, if it may be considered over $\mathbb{R}$. Then, the $\mathbb{R}$-orbit $\theta(t)=\exp(it.N)F_0$ satisfies $\theta'(t)=\nabla \ \psi(t)$. Thus $\lambda(t)$ defines the gradient flow line of $\psi(t)$. This implies that the limit 
\begin{equation} 
\lim_{t \to \pm \infty}\ \ (\theta(t)=\exp(it.N)F_0)
\end{equation} 
always exists and should correspond to some critical points of the function $\psi$. In \cite{VRS} the properties of the orbit function $\theta(t)$ have been studied via a moment map $\mu:\mathcal{D} \to \mathfrak{g}$, where $\mathcal{D}$ is the flag variety and $\mathfrak{g}$ is the Lie algebra of the Lie group $G$ acting on $\mathcal{D}$. Then, to a point $F \in \mathcal{D}$ one can associate the invariant 
\begin{equation} 
w_{\mu}(F, u):=\lim_{t \to \infty} \langle \mu(\exp(itu)F),u \rangle , \qquad u \in \mathfrak{g}
\end{equation} 
for a symplectic Riemannian metric $<.,.>$ on $\mathcal{D}$, called Mumford numerical invariant of $F$. By a theorem of Mumford it follows that,
\begin{equation}
    w_{\mu}(gF, gug^{-1})=w_{\mu}(F, u), \qquad g \in G.
\end{equation}
This invariant plays a crucial role in geometric invariant theory on the semi-stability; see \cite{VRS} for details. 
\end{remark}
\begin{remark}
Defining the limit slope filtration in a variation of \'etale or crystalline cohomology with $\mathbb{C}_p$-coefficients is a significant step in geometric Hodge theory. For instance, beginning from a proper smooth map $f:X \to S$, one can associate the $p$-adic (Hodge-Tate) local system 
\begin{equation} 
\mathcal{V}= R^kf_*\mathbb{Q}_p \otimes \mathbb{C}_p=\bigcup_s \left [H_{\textrm{\'et,s}}^k=H_{\textrm{\'et}}^k(X_s,\mathbb{Q}_p) \otimes \mathbb{C}_p \right ].
\end{equation} 
As explained above, we equip this local system with the limit slope filtration corresponding to some choice of nilpotent elements in the Lie algebra of $Gl(V)$. Moreover, it is a canonical semistable filtration.
\end{remark}
%
%
%
\subsection{$p$-adic $SL_2$-orbits}
The nilpotent orbit theorem and the $SL_2$-orbit theorem appear in pairs in the context of asymptotic Hodge theory. We add the statement of the $SL_2$ orbit theorem for the sake of completeness, where at this step having the idea is necessary. The $SL_2$-orbit Theorem states that one can modify a nilpotent orbit to get an $SL_2$-orbit, so it is still asymptotic to the original orbit. The idea is as follows. Let us start with a nilpotent orbit $\exp(N \log \kappa_z) F$, where $F \in \mathcal{D}(\nu)$ (maybe not semistable). By the Jacobson-Morosov theorem, the transformation $X_-=N$ can be paired with transformations $X_+$ and $ Z$ to give a representation,
\begin{equation}
\begin{aligned}
\psi_{p,*}:\mathfrak{sl}_2(\mathbb{C}_p) \to & \mathfrak{g}=Lie(G)\\
\psi_{p,*}(X_+) \in \mathfrak{g}_{-1}, \ \psi_{p,*}(Z) &\in \mathfrak{g}_{0}, \ \psi_{p,*}(X_-=N) \in \mathfrak{g}_{1},
\end{aligned}
\end{equation}
where $X_+,Z,X_-=N$ are $\mathfrak{sl}_2$-triples and $\mathfrak{g}_j:= \{ x \in \mathfrak{g}   \mid  ad (Z)x=j. x \}$. The map $\psi_{p,*}$ can be considered at the level of Lie groups as a homomorphism,
\begin{equation} 
\psi_p:SL(2,\mathbb{C}_p) \to G_{\mathbb{C}_p}.
\end{equation} 
Then, one can define an analytic, horizontal, equivariant embedding as
\begin{equation} 
\tilde{\psi}_p: \mathbb{P}^1(\mathbb{C}_p) \to \mathcal{D}(\nu), \qquad \tilde{\psi}_p(g \circ i)=\psi_p(g) \circ F_0.
\end{equation} 
The function $g$ on a neighborhood of $0$ and defined by
\begin{equation}
\exp(N \log \kappa_z) \circ F=g(\kappa_z)\tilde{\psi}_p(\kappa_z),
\end{equation}
is analytic and is defined into the $p$-adic Lie group $G({\mathbb{C}_p})$. Moreover, $Ad \ g(0)^{-1}(N)$ is the image under $\psi_{p,*}$ of $\left(
\begin{array}{cc}
0 & 1\\
0 & 0  \end{array} \right)$. The proofs are straightforward and analogous to the one in \cite{P} or \cite{Sch}, see also \cite{Vo, KP, GS, D}. We summarize as follows.
\begin{theorem}\label{th:homomorphism}
It is possible to choose a homomorphism of complex Lie groups $\psi_p:SL(2,\mathbb{C}_p) \to G_{\mathbb{C}_p}$, and an analytic, horizontal, equivariant embedding $\tilde{\psi}_p: \mathbb{P}^1(\mathbb{C}_p) \to \mathcal{D}(\nu)$, which is related to $\psi_p$ by $\tilde{\psi}_p(g \circ i)=\psi_p(g) \circ F_0$ and such that $\psi_{p,*}:\mathfrak{sl}_2(\mathbb{C}_p) \to \mathfrak{g}=Lie(G)$ satisfies 
\begin{equation}
\psi_{p,*}(X_+) \in \mathfrak{g}_{-1}, \qquad \psi_{p,*}(Z) \in \mathfrak{g}_{0}, \qquad \psi_{p,*}(X_-=N) \in \mathfrak{g}_{1}
\end{equation}
\noindent
where $X_+,Z,X_-=N$ are $\mathfrak{sl}_2$-triples and an analytic mapping $\ \kappa_z \mapsto g(\log \kappa_z)\ $ of a neighborhood $W$ of $0$ into the $p$-adic Lie group $G_{\mathbb{C}_p}$ such that,
\begin{itemize}
\item[(a)] $\exp(N \log \kappa_z) \circ F=g(\kappa_z)\tilde{\psi}_p(\kappa_z)$;
\item[(b)] $Ad \ g(0)^{-1}(N)$ is the image under $\psi_{p,*}$ of $\left(
\begin{array}{cc}
0 & 1\\
0 & 0  \end{array} \right)$
\end{itemize}
\end{theorem}
\begin{proof} (sketch of \cite{Sch, P})
The point $\exp(N \log \kappa_z ) \circ F$ defines the Hodge-Tate decomposition,
\begin{equation}
H_{\textrm{\'et}}^{k}( \mathbb{C}_p)= \bigoplus_{i=0}^k  H^{i,k-i} (-i)(y)
\end{equation}
of the \'etale cohomology with $\mathbb{C}_p$-coefficients. we have $H^{i,k-i}(-i)(\kappa_z)=h(\kappa_z)H^{i,k-i}(-i))$. The nilpotent orbit $\exp(N\log \kappa_z) \circ F$ provides an $\epsilon$-Hermitian class $(H_{\textrm{\'et}}^{k}( \mathbb{C}_p),B)$ for the representation $H_{\textrm{\'et}}^{k}( \mathbb{C}_p)$. We will choose a basis $(w_1,...,w_r)$ such that they successively generate the Frobenius-Hodge pieces in the slope filtration. It is possible to modify the given basis such that $(w_1(\kappa_z),...,w_r(\kappa_z))$ represents a basis with similar properties and remains orthogonal. Then $w_j(\kappa_z)$ are rational functions of $\kappa_z$. The function $h(\kappa_z)$ is a lifting of the 1-parameter family $\exp(N \log \kappa_z)$. Its logarithmic derivative $A=-2 h^{-1} h'$ takes values in $\mathfrak{g}_{\mathbb{C}_p}$. Setting 
\begin{equation}
A(\kappa_z)=-2h^{-1}h', \qquad F(\kappa_z)=Ad\ h(\kappa_z)^{-1}N, \qquad E(\kappa_z) =-\theta F(\kappa_z)
\end{equation} 
where $\theta$ is a Cartan involution of $\mathfrak{g}$. One is needed to solve the Lax system in analytic coordinates
\begin{equation} \label{eq:lax}
2E'(\kappa_z)=-[A(\kappa_z),E(\kappa(z)], \  2F'(\kappa_z)=[A(\kappa_z),F(\kappa_z)], \  A'(\kappa_z)=-[E(\kappa_z),F(\kappa_z)]
\end{equation}
The solutions to the \eqref{eq:lax} gives the desired $SL_2$-triples. Verifying the properties (a) and (b) in Theorem \ref{th:homomorphism}, it follows that they are the same as the complex case, \cite{Sch, P}.
\end{proof}
We stress the fact that the main reason the whole proof for the complex case extends over the p-adic numbers is that the field $\mathbb{C}_p$ has characteristic zero and is complete. The theory of $SL_2$-triples extends over the Lie algebras over $\mathbb{C}_p$ equally. We refer the reader to \cite{Sch} for the complete proof.

\subsection{p-adic nilpotent orbit theorem in the mixed case} 
The definitions of period domain and period map can be similarly stated for variation of mixed Hodge structure (MHS), \cite{D, Sch, P, KP, GS, Vo}. However, the asymptotic behavior of the period maps and the nilpotent orbits are essentially different from the pure case. Also, the period map can have essential (non-removable) singularities (see \cite{P} for instance), which is never the case in pure HS. We try to discuss this notion for the mixed Hodge structure in p-adic settings, which we introduce in the following definition. The results of this subsection are direct applications of the theory explained in \cite{P} (for complex Hodge structure) to the p-adic case, and Theorem \ref{thm:nilpotent-orbit}. We state them below to fix the ideas.
\begin{definition} (Mixed Hodge (MH) structure in p-adic setting) \label{def:MHT} A $\mathbb{Q}_p$-vector space is said to have a mixed Hodge structure if it is endowed with:
\begin{itemize}
    \item an increasing filtration $P_{\bullet}$ defined over $\mathbb{Q}_p$ (indexed over integers called weight filtration) and,
    \item a decreasing filtration $F^{\bullet}$ defined over $\mathbb{C}_p$ (indexed over rational numbers called slope filtration) 
\end{itemize}
such that the graded pieces $Gr_j^P V$ together with the induced filtration by $F^{\bullet}$ are pure Hodge structure in a sense explained in Section \ref{sec:preliminaries}.
\end{definition}
When $N:V \to V$ is a nilpotent transformation defined over $\mathbb{Q}_p$ then, we call the MH structure $(V,F^{\bullet},P_{\bullet})$ to be $N$-admissible if
\begin{itemize}
\item the limiting slope filtration $F_{\infty}$ exists, and
\item the relative weight filtration $P^{rel}(N,P_{\bullet})$ exists.
\end{itemize}
In this case $(F_{\infty},P^{rel}(N,P_{\bullet}))$ is called the limit mixed Hodge structure. We state the following analog for the Theorem \ref{thm:nilpotent-orbit}. The following theorem is the analogous orbit theorem for mixed Hodge structure period domains.
\begin{theorem} \label{thm:mixed-orbit} (Nilpotent orbit theorem for mixed Hodge-Tate structure) 
Let us assume $\mathcal{V} \to B(r)^*$ is a variation of mixed Hodge structure over the punctured $p$-adic disc, and $N:V \to V$ is a nilpotent transformation on $\mathcal{V}_{s}=V$. Then,
\begin{itemize}
\item $\eta(\kappa_z)=\exp(N \log(\kappa_z))F_{\infty}$ is an $N$-admissible nilpotent orbit, and 
\item for each non-archimedean metric $d$ on $\mathcal{D}^{an,rig}$, we have the following distance estimate 
\begin{equation}
d(\Theta(\log \kappa_z), \eta(\kappa_z)) <C. p^{-k.l/q(n)}r^n, \qquad \forall n, \ (r <<1),
\end{equation}
where $q(n) \to \infty$ as $n \to \infty$.
\end{itemize}
\end{theorem}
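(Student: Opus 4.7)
The plan is to reduce Theorem 9.3 to the pure case established in Theorem 6.3 by passing to the graded pieces of the weight filtration $P_\bullet$, and then to lift the conclusion back to $V$ using the $N$-admissibility hypothesis. By definition of a mixed Hodge-Tate structure, each $gr_j^P V$ equipped with the induced filtration by $F^\bullet$ is a pure Hodge-Tate structure. Since $N$ is defined over $\mathbb{Q}_p$, it preserves $P_\bullet$ and induces nilpotent endomorphisms $\bar{N}_j: gr_j^P V \to gr_j^P V$ compatible with $\Phi$. Moreover, the $N$-admissibility hypothesis guarantees that both the limit slope filtration $F_\infty$ on $V$ and the relative weight filtration $P^{rel}(N, P_\bullet)$ exist.

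First I would apply Theorem 6.3 to each of the induced pure variations $gr_j^P \mathcal{V} \to B(r)^*$ with nilpotent transformation $\bar{N}_j$. This produces, for each $j$, a limit semistable slope filtration $\bar{F}_\infty^{(j)}$ on $gr_j^P V$ and a pure nilpotent orbit $\bar{\eta}_j(\kappa_z) = \exp(\bar{N}_j \log \kappa_z)\bar{F}_\infty^{(j)}$, together with the Fourier-type distance estimate from Lemma 6.5. By construction, $F_\infty$ induces $\bar{F}_\infty^{(j)}$ on each graded piece, so the graded of $F_\infty$ is semistable; this is the replacement for pure semistability in the mixed setting, and it shows that $(F_\infty, P^{rel}(N, P_\bullet))$ is a bona fide MHT structure, hence $\eta(\kappa_z) \in M$ for $\kappa_z$ sufficiently small. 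The admissibility condition delivers the polarization on each graded piece automatically.

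For the distance estimate, I would proceed exactly as in the proof of Theorem 6.3. Using Sen's Theorem 3.3 applied to the Galois representation on $V$, one writes $\Theta(\log \kappa_z) = \exp[\mathfrak{n}(\chi_z) \log \chi_z] F_0$, expand the product $\exp[N \log \kappa_z]\exp[\mathfrak{n}(\chi_z)\log \chi_z]$ in the Mahler basis $P_m(\cdot\,\Omega)\kappa_z^m$ provided by Theorem 5.4, and invoke the Amice--Schneider estimate (76) for the Fourier coefficients. The only modification is that we split the estimate according to $P_\bullet$: since the non-archimedean triangle axiom (72) allows us to replace a sum of norms by their maximum, the bound on $V$ reduces to the maximum of the bounds on each $gr_j^P V$, all of which are controlled by the pure case.

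The main obstacle I expect is the careful handling of the relative weight filtration $P^{rel}(N, P_\bullet)$, which is the genuinely mixed ingredient with no counterpart in Theorem 6.3. One must verify that the splitting of $P_\bullet$ used to identify $V$ with $\bigoplus_j gr_j^P V$ can be chosen $\kappa_z$-analytically on a small disc, and that under this splitting the lift of the graded limit filtrations to $V$ agrees with $F_\infty$ up to terms absorbed by the estimate. Following the complex model of Cattani--Kaplan--Schmid and Pearlstein, this should come from an iterated deformation along the weight filtration; in the $p$-adic setting the necessary convergence is again supplied by Theorem 5.4 and Lemma 6.5, but checking that $P^{rel}(N, P_\bullet)$ satisfies the two characterizing conditions (the analogues of the two items in the definition of the monodromy filtration recalled in Section 8) requires a direct argument on $N$-strings inside each $gr_j^P$, which is the technically delicate step.
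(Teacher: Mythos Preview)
Your proposal is correct in spirit but considerably more elaborate than what the paper actually does. The paper's proof is two lines: the first assertion is declared ``a consequence of admissibility'' (since $F_\infty$ and $P^{rel}(N,P_\bullet)$ are \emph{assumed} to exist, there is nothing to construct), and the second assertion is obtained by citing the inequality (85) from the proof of Theorem~6.3 together with the uniform Fourier bound (76) of Lemma~6.5. The point you are missing is that (85) is a purely analytic estimate on the matrix-valued functions $\exp[N\log\kappa_z]$ and $\exp[\mathfrak{n}(\chi_z)\log\chi_z]$; it never uses that the underlying structure is pure, so it applies to $V$ directly without any passage to $gr^P_\bullet$.

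Your reduction to graded pieces, the search for a $\kappa_z$-analytic splitting of $P_\bullet$, and the invocation of Cattani--Kaplan--Schmid--Pearlstein type iterated deformations are all unnecessary here and introduce technical obligations the paper never incurs. In the complex mixed case those devices are needed because one must \emph{prove} the existence of the limit MHS and control the unipotent part of the extension data; in the present $p$-adic statement both ingredients are hypotheses, so the work collapses. What your route buys is a more self-contained argument that would survive if admissibility were weakened to a graded hypothesis, but as stated the theorem does not require it.
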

\begin{proof}
The first assertion is a consequence of admissibility, and the second follows from the uniform bound in \eqref{eq:fourier-bound}.
\end{proof}

\end{document}